\newtheorem{theorem}{Theorem}
\newtheorem{lemma}[theorem]{Lemma}
\newtheorem{definition}[theorem]{Definition}
\newtheorem{corollary}[theorem]{Corollary}
\newtheorem{proposition}[theorem]{Proposition}
\newtheorem{problem}[theorem]{Problem}
\theoremstyle{definition}
\newtheorem{remark}[theorem]{Remark}
\newcommand{\R}{\mathbb{R}}
\newcommand{\Q}{\mathbb{Q}}
\newcommand{\Z}{\mathbb{Z}}
\newcommand{\N}{\mathbb{N}}
\begin{document}

\title{Factorization in Finitely-Presented Monoids}

\author{Alfred Geroldinger and Zachary Mesyan}

\maketitle

\begin{abstract}

We study arithmetic properties of factorizations of elements into products of generators, in monoids given with explicit presentations. After relating and comparing this perspective to the more usual approach of factoring into products of atoms, as well as other more recent alternatives, we explore how the relations in the presentation of a monoid affect factorization. In the process, we construct a large class of non-commutative fully elastic monoids. We also show that any finitely-presented cancellative normalizing monoid satisfies the Structure Theorem for Unions. Examples are constructed to demonstrate the sharpness of our results, and exhibit unusual factorization behavior.

\medskip

\noindent \emph{Keywords:} factorization, length sets, finitely-presented monoid, monoid with presentation, normalizing monoid

\noindent \emph{2020 MSC numbers:} 20M13, 20M05, 20M25, 16S36
\end{abstract}

\section{Introduction}

For a long time Factorization Theory was mainly focused on commutative integral domains and commutative cancellative monoids, and the relevant building blocks were \emph{atoms} (i.e., non-units that cannot be expressed as products of non-units). In the last decade the scope has spread rapidly to new algebraic fields, and now includes commutative rings with zero-divisors, as well as non-commutative rings and monoids (with or without idempotents). This necessitated new building blocks, beyond atoms (see~\cite{S, Ge-Zh20a, Co-Go25a, Co25a, Ge-Lo-Ki26} for recent surveys; further specific references are given below).

Transfer homomorphisms are a  key tool in Factorization Theory. They preserve systems of factorization length sets, and they can provide a bridge between objects having quite different algebraic properties. Deep work has revealed that there are transfer homomorphisms from various classes of non-commutative prime rings to commutative monoids \cite{Sm13a, BS, Sm19a}, from non-integrally closed domains to integrally closed domains \cite{Ra25b}, and more \cite{Ba-Po25a}. Nevertheless, such statements are far from being true in general. There are no length-preserving homomorphisms from Weyl algebras to commutative monoids \cite[Lemma 8.1]{BBNS}, or from general power monoids to cancellative monoids \cite[Proposition 4.12]{Fa-Tr18a}. There are finitely-generated right-cancellative monoids, presented by just one relation, whose systems of length sets are distinct from the system of length sets of any commutative finitely-generated cancellative monoid \cite[Corollary 4.4]{GS}. Every finitely-generated cancellative monoid, that is commutative, has \emph{bounded factorization} (i.e., all length sets are finite) and satisfies the \emph{Structure Theorem for Length Sets}. Non-commutative finitely-presented cancellative monoids need not even have bounded factorization \cite[Section 7]{BBNS}. It is well-known and easy to see that commutative noetherian domains have bounded factorization, but it is an open problem whether this remains true for non-commutative noetherian prime rings. These results demonstrate, not surprisingly, that arithmetic investigations of non-commutative monoids and rings need new approaches and  techniques, beyond what was  used in the commutative setting. Furthermore, the loss of commutativity, in classes such as finitely-generated cancellative monoids or noetherian rings, may change arithmetic properties substantially. 

Until quite recently, in the literature on factorizations contained only a small number of papers where factorization invariants are not defined via atoms (e.g.,~\cite{H-K, CDHK}). However, in an effort to generalize the theory and overcome some of the aforementioned hurdles, factorization building blocks have been subsequently adopted to other settings, such as radical ideals, valuation elements, and others (e.g.,~\cite{Ch-Re20a,GS25b, Li-Sc-Si25, Ch-Re26a, Iz-Kn26a}). Moreover, a general preorder-based theory of factorization has been rapidly developing, where a wide array of elements can be used as building blocks (see~\cite{Tr22a, Co-Tr23a, CT, Co-Tr23b, Co-Tr24b}).

This note is devoted to factorizations in not-necessarily-commutative monoids presented by generators and relations--a line of inquiry started in~\cite{GS}, with atoms as building blocks. (For earlier contributions in the commutative cancellative setting see~\cite{C-G-L-M-S12, Ga-Oj-SN13a}). Here, however, instead of considering factorizations of elements into products of atoms (or other aforementioned alternatives), we use generators, since they provide a natural factorization basis in the presence of an explicit presentation. We reformulate the usual arithmetic concepts of Factorization Theory, such as \emph{length} and \emph{elasticity}, in terms of generators (Section~\ref{prelim-sect}), translate various standard results about elasticity and the \emph{Structure Theorem for Unions} to that setting (using abstract results of Tringali~\cite{T} about subadditive collections of natural numbers), and then conduct a systematic study. 

In Section~\ref{gen-sect} we connect factorization in terms of generators to factorization in terms of atoms, as well as other types of building blocks appearing in recent literature, and discuss the consequences of our framework. In particular, generators and atoms are very closely related, and for a \emph{reduced} monoid (i.e., one having no non-identity units) with an irredundant generating set, being atomic is equivalent to the generators and atoms coinciding (Proposition~\ref{atomic-prop}). Then, in Section~\ref{basics-sect}, we explore the manner in which the relations in the presentation of a monoid affect factorization. This aspect was largely neglected in the past, even in the setting of commutative and cancellative monoids (though some steps in that direction have been taken by Chapman, Garc{\'i}a-S{\'a}nchez, Philipp, et al.--see \cite{CGLPR,P1,P2}). Among other observations, we show that in a monoid presented by one relation, the factorization length sets form arithmetic progressions (Proposition~\ref{one-relation-prop}). Moreover, any monoid with a free generator and accepted elasticity is \emph{fully elastic} (Theorem~\ref{fully-elastic-theorem}).

Section~\ref{duo-sect} is devoted to \emph{normalizing} monoids, that is, monoids $M$ such that $aM = Ma$ for all $a \in M$, which constitute a wide and well-studied class (e.g., \cite{JO,Ga-Li11b,G0,BS}). In Theorem~\ref{duo-thrm} we show that a finitely-presented normalizing cancellative monoid satisfies the Structure Theorem for Unions (roughly speaking, the length sets form arithmetic progressions, with very confined exceptions). Moreover, if such a monoid has bounded factorization, then it has accepted elasticity (Proposition~\ref{duo-prop}) and satisfies the \emph{Strong Structure Theorem for Unions}.

In Section~\ref{eg-sect} we give explicit constructions of finitely-presented cancellative monoids that have various pathological factorization behaviors, to demonstrate the sharpness of our results from Section~\ref{duo-sect}. More specifically, in Proposition~\ref{elasticity-eg} we exhibit a one-relation cancellative bounded-factorization monoid that does not have accepted elasticity, but satisfies the Structure Theorem for Unions. Then, in Proposition~\ref{no-structure-eg}, we construct a two-relation cancellative bounded-factorization monoid that does not satisfy the Structure Theorem for Unions. There are few monoids in the literature, of any flavor, that fail to satisfy that theorem, with the first appearing in~\cite{FGKT}. 

\section{Preliminaries} \label{prelim-sect}

In this section we record the relevant definitions and results from the literature, adapted to our setting.

\subsection{General Terminology}

Let $M$ be a (multiplicative) monoid. We denote by $M^{\times}$ the group of invertible elements of $M$, and say that $M$ is \emph{reduced} if $M^{\times} = \{1\}$. An element $a \in M$ is an \emph{atom} if $a \notin M^{\times}$, and for all $b,c \in M$, $a=bc$ implies that either $b \in M^{\times}$ or $c \in M^{\times}$. We denote the set of atoms of $M$ by $\mathcal{A}(M)$, and say that $M$ is \emph{atomic} if every element of $M \setminus M^{\times}$ is a product of atoms. The monoid $M$ is 
\begin{itemize}
\item \emph{cancellative} if for all $a,b,c \in M$, each of $ab=ac$ and $ba=ca$ implies that $b=c$;
\item \emph{unit-cancellative} if for all $a,b \in M$, each of $a=ab$ and $a=ba$ implies that $b \in M^{\times}$;
\item \emph{acyclic} if for all $a,b,c \in M$, $a=bac$ implies that $b, c \in M^{\times}$.
\end{itemize}
Clearly, every cancellative or acyclic monoid is unit-cancellative, but not conversely. A  cancellative non-commutative monoid need not be acyclic. The concept of unit-cancellativity was first introduced in the commutative setting in~\cite{Ro-GS-GG04b}, although under a different name, and independently in~\cite{FGKT}. We would be remiss not to mention 
that the monoid-theoretic notion of unit-cancellativity is reminiscent of Bouvier's notion of \emph{présimplifiable} for commutative rings (see~\cite{MR297758,MR297745}).
The concept of acyclicity was introduced in~\cite{Tr22a}.

Given a set $X$, let $\langle X \rangle$ denote the free monoid generated by $X$, with multiplicative identity $1$.  For a set $X$ and a relation $R \subseteq \langle X \rangle \times \langle X \rangle$ on $\langle X \rangle$, we denote by $\langle X \mid R \rangle$ the monoid generated by $X$ subject to $R$. A monoid $M$ is \emph{finitely-presented} if $M = \langle X \mid R \rangle$, for some finite $X$ and $R$. When specifying the elements of a presentation explicitly, we shall typically suppress set notation, and, for example, write $\langle x,y \mid (x,y) \rangle$, in place of $\langle \{x,y\} \mid \{(x,y)\} \rangle$. From now on, when writing $\langle X \mid R \rangle$, without specifying $X$ and $R$, we shall mean the monoid generated by arbitrary sets $X$ and $R \subseteq \langle X \rangle \times \langle X \rangle$.

Let $M := \langle X \mid R \rangle$. We say that  $X$ is \emph{irredundant} (\emph{for} $M$) if there is no proper subset of $X$ that generates $M$ as a monoid. The relation $R$ is \emph{symmetric} in case $(a,b) \in R$ implies that $(b,a) \in R$, for all $a,b \in R$. Given $a,b \in \langle X \rangle$, we say that $b$ is a \emph{subword} of $a$, or that $a$ \emph{contains} $b$, if $a = cbd$, as elements of $\langle X \rangle$, for some $c,d \in \langle X \rangle$. For all $a,b \in \langle X \rangle$, we write $a=_M b$ if $a$ and $b$ reduce to the same element of $M$. 

We denote by $\N$ (the set of) the natural numbers (including $0$), by $\Z$ the integers, by $\Q$ the rationals, and by $\R$ the real numbers. By $\N^+$ (or $\Z^+$), $\Q^+$, and $\R^+$ we denote the corresponding subsets of positive numbers. 

\subsection{Subadditive Families} \label{subadd-sect}

Next we recall certain arithmetic concepts defined for sets of natural numbers, which will subsequently help us describe factorizations in monoids.

Let $\mathcal{L}$ be a collection of subsets of $\N$. We say that $\mathcal{L}$ is \emph{subadditive} if for all $L_1, L_2 \in \mathcal{L}$, there exists $L_3 \in \mathcal{L}$ such that $L_1 + L_2 \subseteq L_3$, and that $\mathcal{L}$ is \emph{primitive} if
\[
\bigcup_{L \in \mathcal{L}} L \cap \N^+ \neq \emptyset \quad \text{and} \quad \gcd\bigg(\bigcup_{L \in \mathcal{L}} L \cap \N^+\bigg) = 1.
\] 
Also, $\mathcal{L}$ is \emph{directed} if it is subadditive and $1 \in L$, for some $L \in \mathcal{L}$. We note that if $\mathcal{L}$ is directed, then it is necessarily primitive.

For all $k \in \N$, let 
\[
\mathcal{U}_k(\mathcal{L}) := \bigcup \{L \in \mathcal{L} \mid k \in L\},
\]
and, provided that $\mathcal{U}_k(\mathcal{L}) \neq \emptyset$, let
\[
\rho_k(\mathcal{L}) := \sup (\mathcal{U}_k(\mathcal{L})) \in \N \cup \{\infty\} \quad \text{(the \emph{$k$th elasticity of $\mathcal{L}$})},
\]
\[
\lambda_k(\mathcal{L}) := \min (\mathcal{U}_k(\mathcal{L})) \in \N.
\]

Given $L \in \mathcal{L}$, we say that $d \in \N^+$ is a \emph{distance of $L$} if $[k,k+d] \cap L = \{k,k+d\}$ for some $k \in L$, where 
\[
[n,m] := \{l \in \Z \mid n\leq l \leq m\},
\] 
for all $n,m \in \Z$. For each $L \in \mathcal{L}$, let $\Delta(L)$ denote the set of all distances of $L$, let
\[
\rho(L) := \frac{\sup (L \cap \N^+)}{\min (L\cap \N^+)} \in \Q^+ \cup \{\infty\}
\]
in case $L \cap \N^+ \neq \emptyset$, and let $\rho(L) := 0$ otherwise. Also, let
\[
\Delta(\mathcal{L}) := \bigcup_{L \in \mathcal{L}} \Delta (L) \quad \text{(the \emph{set of distances of $\mathcal{L}$})},
\]
and
\[
\rho(\mathcal{L}) := \sup \{\rho(L) \mid L \in \mathcal{L}\} \in \R^+ \cup \{0, \infty\} \quad \text{(the \emph{elasticity of $\mathcal{L}$})}.
\]
We say that $\mathcal{L}$ has \emph{accepted elasticity} if either $\mathcal{L} = \emptyset$, or $\rho(\mathcal{L}) = \rho(L)< \infty$ for some $L \in \mathcal{L}$. Moreover, $\mathcal{L}$ is \emph{fully elastic} if for every $q \in \Q$ satisfying $1 < q < \rho (\mathcal{L})$, there exists $L \in \mathcal{L}$ such that $\rho (L) = q$.

\subsection{Factorizations} \label{factoriz-sect}

Next we define, for each monoid with a presentation, the usual arithmetic indicators of factorization used in the atomic monoid literature, but in terms of generators rather than atoms. It should be noted that our arithmetic indicators very much depend on the specific presentation for the monoid. See Section~\ref{gen-sect} for a more detailed discussion.

Let $M := \langle X \mid R \rangle$. For each $a \in \langle X \rangle$, let $|a| \in \N$ denote the length of $a$, viewed as a word in the elements of $X$, where we understand that $|1| = 0$. For each $a \in \langle X \rangle$, let 
\[
\mathsf{Z}_M(a) := \{b \in \langle X \rangle \mid a=_M b\} \quad \text{(the \emph{set of factorizations of $a$})},
\]
and
\[
\mathsf{L}_M(a) := \{|b| \mid b \in \mathsf{Z}_M(a)\} \quad \text{(the \emph{length set of $a$})}.
\]
We say that $M$ is a \emph{bounded-factorization}, or \emph{BF}, monoid, respectively is \emph{half-factorial}, if $\mathsf{L}_M(a)$ is finite, respectively $\mathsf{L}_M(a) = \{|a|\}$, for all $a \in \langle X \rangle$.

Next, let 
\[
\mathcal{L}(M) := \{\mathsf{L}_M(a) \mid a \in \langle X \rangle\} \quad \text{(the \emph{system of sets of lengths of $M$})}.
\]
It is easy to see that for all $a,b \in \langle X \rangle$, 
\[
\mathsf{L}_M(a)+\mathsf{L}_M(b) \subseteq \mathsf{L}_M(ab),
\]
and hence $\mathcal{L}(M)$ is a subadditive collection of subsets of $\N$. Moreover, $\mathcal{L}(M)$ is directed, and hence also primitive, provided that $X \neq \emptyset$, since in that case $1 \in \mathsf{L}_M(x)$ for any $x\in X$.

Since $\mathcal{L}(M)$ consists of subsets of $\N$, we can apply the arithmetic concepts defined in Section~\ref{subadd-sect} to it. For notational convenience, we define (for all $k \in \N$) $\mathcal{U}_k(M):= \mathcal{U}_k(\mathcal{L}(M))$, $\rho_k(M) := \rho_k(\mathcal{L}(M))$, $\lambda_k(M) := \lambda_k(\mathcal{L}(M))$, $\Delta(M) := \Delta(\mathcal{L}(M))$, and $\rho(M) := \rho(\mathcal{L}(M))$. Likewise, we say that $M$ has \emph{accepted elasticity}, respectively is \emph{fully elastic}, in case $\mathcal{L}(M)$ has that property. 

\subsection{Structure Theorem for Unions}

Let $M := \langle X \mid R \rangle$. We say that $M$ satisfies the \emph{Structure Theorem for Unions} if there exist $d,k^*\in \N^+$ and $m\in \N$, such that for all $k \geq k^*$,
\[
(k + d\Z) \cap [\lambda_{k}(M) + m, \rho_{k}(M) - m] \subseteq \mathcal{U}_k (M) \subseteq k + d\Z.
\]
(Here we understand $[\lambda_{k}(M) + m, \rho_{k}(M) - m]$ to be $\{l \in \Z \mid \lambda_{k}(M) + m \leq l\}$, in case $\rho_{k}(M) = \infty$.) Moreover, $M$ satisfies the \emph{Strong Structure Theorem for Unions} if $M$ satisfies the Structure Theorem for Unions, and there exist $n,k^* \in \N^+$ such that the following hold for all $m \in \N$ and $k \geq k^*:$
\begin{enumerate}[\rm (1)]
\item $(\rho_{k}(M) - \mathcal{U}_{k}(M)) \cap [0,m] = (\rho_{k+n}(M) - \mathcal{U}_{k+n}(M)) \cap [0,m],$
\item $(\mathcal{U}_{k}(M) - \lambda_{k}(M)) \cap [0,m] = (\mathcal{U}_{k+n}(M) - \lambda_{k+n}(M)) \cap [0,m]$.
\end{enumerate}

Since, as noted in Section~\ref{factoriz-sect}, $\mathcal{L}(M)$ is a subadditive primitive collection of subsets of $\N$, provided that $X \neq \emptyset$, we can apply general results of Tringali~\cite{T} about such collections, to obtain statements regarding factorization in $M$, analogous to standard statements of that sort for atomic monoids. We shall use the following on normalizing monoids in Section~\ref{duo-sect}.

\begin{proposition} \label{tringali-prop}
Let $M := \langle X \mid R \rangle$, where $X \neq \emptyset$.
\begin{enumerate}[\rm (1)]
\item If $\Delta(M)$ is finite and $\rho_n(M) = \infty$ for some $n \in \N^+$, then $M$ satisfies the Structure Theorem for Unions.

\item If $\Delta(M)$ is finite, and  there exists $m \in \N^+$ such that $\rho_k(M) - \rho_{k-1}(M) \leq m$ for all sufficiently large $k$, then $M$ satisfies the Structure Theorem for Unions.

\item If $M$ has accepted elasticity, then $M$ satisfies the Strong Structure Theorem for Unions.
\end{enumerate}
\end{proposition}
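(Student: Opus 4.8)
The plan is to recognize that this proposition is, in each of its three parts, a direct translation of an abstract structural result of Tringali~\cite{T} to the specific collection $\mathcal{L}(M)$, so that there is essentially nothing monoid-theoretic left to prove. The crucial input, already recorded in Section~\ref{factoriz-sect}, is that whenever $X \neq \emptyset$ the system $\mathcal{L}(M)$ is a subadditive and directed (hence primitive) collection of subsets of $\N$, with directedness coming from $1 \in \mathsf{L}_M(x)$ for any $x \in X$. Moreover, every invariant appearing in the statement---the set of distances $\Delta(M)$, the elasticities $\rho_k(M)$, the minima $\lambda_k(M)$, the unions $\mathcal{U}_k(M)$, accepted elasticity, and the two Structure Theorems for Unions---was defined in Sections~\ref{subadd-sect}--\ref{factoriz-sect} precisely so as to coincide, for the collection $\mathcal{L}(M)$, with Tringali's definitions for a general subadditive family. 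Consequently the argument reduces to matching hypotheses and then quoting the relevant theorem from~\cite{T}.

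Concretely, I would first restate that $\mathcal{L}(M)$ is subadditive and primitive, citing the remark in Section~\ref{factoriz-sect}, so that the standing hypotheses of Tringali's theorems are met, and then dispatch the three parts one at a time. For part~(1), with $\Delta(M)$ finite and $\rho_n(M) = \infty$ for some $n \in \N^+$, I would invoke the version in~\cite{T} establishing, for a primitive subadditive family with finite set of distances and infinite elasticity in some degree, that the unions $\mathcal{U}_k$ are eventually almost arithmetic progressions---exactly the conclusion encoded by the Structure Theorem for Unions. For part~(2), with $\Delta(M)$ finite and the increments $\rho_k(M) - \rho_{k-1}(M)$ eventually bounded by a fixed $m \in \N^+$, I would quote the companion theorem of~\cite{T} that derives the same almost-arithmetic-progression behavior from slowly growing elasticities. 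For part~(3), accepted elasticity of $M$ means by definition accepted elasticity of $\mathcal{L}(M)$, so the strong periodicity of the left and right ``tails'' of the $\mathcal{U}_k$---which is what conditions~(1) and~(2) in the definition of the Strong Structure Theorem express---follows from the corresponding strong theorem of~\cite{T}.

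The only genuine care required---and hence the place where I expect the (modest) main obstacle to lie---is the bookkeeping of conventions, to ensure our definitions line up exactly with those of~\cite{T}. In particular I would verify: that our convention $0 \in \N$ and our handling of the case $\rho_k(M) = \infty$ in the definition of the Structure Theorem for Unions agree with Tringali's formulation; that ``primitive'' (available here in the stronger form ``directed'') is indeed the hypothesis under which each cited theorem holds; and that our interval/arithmetic-progression phrasing of the (Strong) Structure Theorem for Unions is the same as, or immediately equivalent to, the almost-arithmetic-progression conclusions stated in~\cite{T}. Once these alignments are checked, each part is immediate, and I would present the proof as a one-line observation that $\mathcal{L}(M)$ satisfies the hypotheses of~\cite{T}, followed by three short paragraphs, each reducing to a single citation.
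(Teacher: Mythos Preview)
Your proposal is correct and matches the paper's approach exactly: the proof in the paper simply notes that $\mathcal{L}(M)$ is subadditive and primitive (since $X \neq \emptyset$), and then cites \cite[Theorem 2.20]{T}, \cite[Corollary 2.24]{T}, and \cite[Theorem 2.27]{T} for parts (1), (2), and (3), respectively. Your additional remarks about checking conventions are reasonable due diligence but not strictly necessary, since the definitions in Sections~\ref{subadd-sect}--\ref{factoriz-sect} were set up precisely to align with Tringali's.
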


\begin{proof}
The three statements above follow from~\cite[Theorem 2.20]{T},  \cite[Corollary 2.24]{T}, and~\cite[Theorem 2.27]{T}, respectively. 
\end{proof}

Many other results from~\cite{T}, about subadditive primitive collections, can be likewise translated into statements about monoids with presentations. We record one more, as an illustration.

\begin{proposition}
Let $M := \langle X \mid R \rangle$, where $X \neq \emptyset$, and $\rho(M) < \infty$. Then the following statements are equivalent.
\begin{enumerate}[\rm (1)]
\item $M$ has accepted elasticity.

\item There exists $n \in \N^+$ such that $kn\rho(M) = \rho_{kn}(M)$, for all $k \in \N^+$.

\item There exists $n \in \N^+$ such that $n\rho(M) = \rho_{n}(M)$.
\end{enumerate}
\end{proposition}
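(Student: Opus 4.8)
The plan is to first record an elementary two-sided comparison between the $k$th elasticities $\rho_k(M)$ and the global elasticity $\rho(M)$, and then read off each implication from it. The crucial inequality is $\rho_k(M) \le k\,\rho(M)$ for every $k \in \N^+$: if $k \in L$ for some $L \in \mathcal{L}(M)$, then $\min(L \cap \N^+) \le k$, so $\sup L = \sup(L \cap \N^+) = \rho(L)\cdot\min(L\cap\N^+) \le k\,\rho(M)$, and taking the supremum over all such $L$ yields the bound. Since $\rho(M) < \infty$ by hypothesis, this shows that $\mathcal{U}_k(M)$ is a bounded subset of $\N$, so that $\rho_k(M) = \max \mathcal{U}_k(M)$ is actually attained --- a fact I will use repeatedly. (The matching lower bound $\rho_j(M)/j \ge \rho(L)$, obtained by taking $j = \min(L\cap\N^+)$, even gives the identity $\rho(M) = \sup_k \rho_k(M)/k$, but only the upper bound and the attainment are needed below.)

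The implication (2) $\Rightarrow$ (3) is immediate by taking $k = 1$. For (3) $\Rightarrow$ (1), suppose $n\,\rho(M) = \rho_n(M)$. Because $\rho_n(M)$ is attained, there is $L_0 \in \mathcal{L}(M)$ with $n \in L_0$ and $\rho_n(M) \in L_0$. Then $\min(L_0 \cap \N^+) \le n$, while $\sup(L_0 \cap \N^+) = \rho_n(M)$ (it cannot exceed $\rho_n(M)$ since $L_0 \subseteq \mathcal{U}_n(M)$), so $\rho(L_0) \ge \rho_n(M)/n = \rho(M)$; combined with $\rho(L_0) \le \rho(M)$ this forces $\rho(L_0) = \rho(M) < \infty$, which is accepted elasticity.

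For (1) $\Rightarrow$ (2), let $L^* \in \mathcal{L}(M)$ realize $\rho(L^*) = \rho(M)$, and put $a = \min(L^* \cap \N^+)$ and $b = \max(L^* \cap \N^+)$, so that $\rho(M) = b/a$ with $a,b < \infty$. I will set $n = a$. Using subadditivity of $\mathcal{L}(M)$ iteratively, for each $k \in \N^+$ there is $L_k \in \mathcal{L}(M)$ with $\underbrace{L^* + \cdots + L^*}_{k} \subseteq L_k$; in particular $ka \in L_k$ and $kb \in L_k$. Hence $\rho_{ka}(M) \ge \sup L_k \ge kb$, while the comparison inequality gives $\rho_{ka}(M) \le ka\,\rho(M) = kb$. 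Therefore $\rho_{kn}(M) = kn\,\rho(M)$ for all $k \in \N^+$, which is exactly (2).

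The individual steps are short, so the main thing to get right is the bookkeeping around suprema versus maxima: the finiteness hypothesis $\rho(M) < \infty$ is precisely what guarantees that each $\rho_k(M)$ is attained, and this attainment is what makes (3) $\Rightarrow$ (1) go through. The only genuinely structural input is the subadditivity of $\mathcal{L}(M)$ (noted in Section~\ref{factoriz-sect}), used to manufacture the length sets $L_k$ in (1) $\Rightarrow$ (2); everything else is definition-chasing. I do not anticipate a serious obstacle, though one should keep an eye on the degenerate possibility that some $L$ meets $\N^+$ trivially --- this is harmless, since such $L$ contribute $\rho(L) = 0$ and never contain a positive index $k$.
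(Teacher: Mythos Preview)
Your proof is correct and self-contained. The paper takes a different, much shorter route: it simply observes that $X \neq \emptyset$ forces $\rho(M) \neq 0$, so that $\mathcal{L}(M)$ satisfies the hypotheses of \cite[Proposition 2.12]{T}, and then invokes that result as a black box. In other words, the paper treats this proposition as a direct translation of Tringali's general statement about subadditive primitive families, in keeping with its overall strategy (cf.\ Proposition~\ref{tringali-prop}) of importing structural results from~\cite{T}. Your approach, by contrast, unpacks the argument from first principles: the key inequality $\rho_k(M) \le k\,\rho(M)$, the attainment of each $\rho_k(M)$ under the hypothesis $\rho(M) < \infty$, and the use of subadditivity to propagate the extremal length set $L^*$ to all multiples $kn$. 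What you gain is independence from the external reference and a transparent account of exactly where each hypothesis ($X \neq \emptyset$, $\rho(M) < \infty$, subadditivity) is used; what the paper gains is brevity and emphasis on the fact that nothing beyond the general theory of subadditive families is needed.
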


\begin{proof}
Since $X \neq \emptyset$, necessarily $\rho(M) \neq 0$, and so $\mathcal{L}(M)$ satisfies the hypotheses of~\cite[Proposition 2.12]{T}. The equivalence of the above statements now follows from that result.
\end{proof}

\section{Generators vs.\ Atoms and Quarks} \label{gen-sect}

As discussed in the Introduction, Factorization Theory has been traditionally developed in the context of (reduced) atomic monoids, but, in a monoid with an explicit presentation, factoring into generators seems more natural. The two perspectives, however, can be reconciled, under reasonable hypotheses. Specifically, we show next that atoms and generators are very closely related, and, provided that the set of generators for a (reduced) monoid is irredundant (which can always be arranged for a finitely-generated monoid, for example), being atomic amounts to those two types of elements coinciding.

\begin{proposition} \label{atomic-prop}
Let $M := \langle X \mid R \rangle$, and let $\Lambda (Y) := \{uyv \mid y \in Y; u,v \in M^{\times}\}$, for all $Y \subseteq X$.
\begin{enumerate}[\rm (1)]
\item $\mathcal{A}(M) = \Lambda (\mathcal{A}(M) \cap X)$. In particular, if $M$ is reduced, then $\mathcal{A}(M) \subseteq X$.

\item If $X \subseteq \mathcal{A}(M)$, equivalently $\Lambda (X) = \mathcal{A}(M)$, then $M$ is atomic.

\item Suppose that $M$ is reduced and $X$ is irredundant. Then $M$ is atomic if and only if $\mathcal{A}(M) = X$.
\end{enumerate}
\end{proposition}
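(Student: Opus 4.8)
The plan is to prove everything from part~(1), the technical core, after which parts~(2) and~(3) follow quickly. The workhorse throughout is the elementary fact that $\mathcal{A}(M)$ is closed under multiplication by units on either side: for $y\in\mathcal{A}(M)$ and $u,v\in M^\times$ one has $uyv\in\mathcal{A}(M)$, because $uyv\notin M^\times$ (else $y=u^{-1}(uyv)v^{-1}$ would be a unit), and any factorization $uyv=bc$ rewrites as $y=(u^{-1}b)(cv^{-1})$, forcing $u^{-1}b\in M^\times$ or $cv^{-1}\in M^\times$, hence $b\in M^\times$ or $c\in M^\times$. This immediately yields the inclusion $\Lambda(\mathcal{A}(M)\cap X)\subseteq\mathcal{A}(M)$, and it is also exactly what I will use to peel units off an atom when proving the reverse inclusion.

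For $\mathcal{A}(M)\subseteq\Lambda(\mathcal{A}(M)\cap X)$ I would take an atom $a$, write it as a word $a=x_1\cdots x_n$ in the generators (with $n\ge1$, since $a\ne1$), and induct on $n$. The case $n=1$ is immediate. For $n\ge2$, apply the atom property to $a=x_1\,(x_2\cdots x_n)$. If $x_1\in M^\times$, then $x_1^{-1}a=x_2\cdots x_n$ is again an atom by the workhorse, and the inductive hypothesis applied to it, followed by reabsorbing the unit $x_1$ on the left, finishes this case. If instead $w:=x_2\cdots x_n\in M^\times$, then $x_1=aw^{-1}$ is an atom lying in $X$, so $a=1\cdot x_1\cdot w\in\Lambda(\mathcal{A}(M)\cap X)$ directly. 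The particular case of a reduced $M$ is then trivial, since $M^\times=\{1\}$ collapses $\Lambda(Y)$ to $Y$.

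The point requiring care, and the reason I organize the argument this way rather than by a global count, is that $M$ is not assumed cancellative. In particular I may not infer from $w=x_2\cdots x_n\in M^\times$ that each $x_i$ is a unit, nor argue by counting non-unit letters: a product of generators can be a unit without each factor being one. The induction avoids this entirely, since at each step I peel off only a single unit factor and invoke the workhorse to see that what remains is still an atom; the only place needing verification is that the peeled element is genuinely an atom (both a non-unit and indecomposable) in each of the two subcases.

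For part~(2), the stated equivalence is immediate from part~(1): if $X\subseteq\mathcal{A}(M)$ then $\mathcal{A}(M)\cap X=X$ and so $\mathcal{A}(M)=\Lambda(X)$, while conversely $X\subseteq\Lambda(X)=\mathcal{A}(M)$; atomicity then follows because every non-unit is a nonempty product of generators, each of which is now an atom. For part~(3), the implication $\mathcal{A}(M)=X\Rightarrow M$ atomic is part~(2), and reducedness together with part~(1) gives $\mathcal{A}(M)\subseteq X$, so the content is to show every generator is an atom. I would argue by contradiction: if some $x\in X$ is a non-unit but not an atom, then atomicity yields $x=a_1\cdots a_m$ with $m\ge2$ and each $a_i\in\mathcal{A}(M)\subseteq X$. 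The decisive observation is that, since each $a_i$ is an atom while $x$ is not, we must have $a_i\ne x$; hence this expresses $x$ over $X\setminus\{x\}$, showing that $X\setminus\{x\}$ generates $M$ and contradicting irredundancy. A minor preliminary is to exclude $1\in X$, which irredundancy forbids because the identity is always a removable generator.
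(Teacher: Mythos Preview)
Your proof is correct and matches the paper's approach; the only difference is organizational, in that the paper handles part~(1) by directly locating the leftmost non-unit letter $x_m$ in a representation $a=x_1\cdots x_n$ (so $b:=x_1\cdots x_{m-1}\in M^\times$, and atomicity of $a=(bx_m)(x_{m+1}\cdots x_n)$ forces the right factor into $M^\times$) rather than peeling off unit letters one at a time by induction. Your stated worry that non-cancellativity forces the inductive organization is misplaced: the paper's direct argument uses only that a product of units is a unit, never the converse, so it too requires no cancellativity hypothesis.
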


\begin{proof}
(1) Suppose that $a \in \mathcal{A}(M)$. Since $a \neq 1$, we can write $a = x_1\cdots x_n$ for some $x_1, \dots, x_n \in X$ and $n \in \N^+$. Let $m \in \{1, \dots, n\}$ be the minimal index such that $x_m \notin M^{\times}$ (which must exist, since $a \notin M^{\times}$), and let $b := x_1 \cdots x_{m-1}$ and $c := x_{m+1} \cdots x_n$ (where $b$, respectively $c$, is understood to be $1$ in case $m=1$, respectively, $m=n$). Then $b \in M^{\times}$, and $b x_{m} \notin M^{\times}$. Since $a= (bx_{m})c$, and $a \in \mathcal{A}(M)$, it must be the case that $c \in M^{\times}$. Thus $a \in \Lambda (X)$, and so $\mathcal{A}(M) \subseteq \Lambda (X)$. 

Next, it is easy to see that for all $a \in M$ and $u,v \in M^{\times}$, we have $a \in \mathcal{A}(M)$ if and only if $uav \in \mathcal{A}(M)$. (Specifically, if $a \in \mathcal{A}(M)$, $u,v \in M^{\times}$, and $b,c \in M$ are such that $uav =_M bc$, then $a =_M (u^{-1}b)(cv^{-1})$, and so either $b \in M^{\times}$ or $c \in M^{\times}$, which shows that $uav \in \mathcal{A}(M)$. The converse follows by symmetry.) In particular, for all $x \in X$ and $u,v \in M^{\times}$, we have $x \in \mathcal{A}(M)$ if and only if $uxv \in \mathcal{A}(M)$. Since $\mathcal{A}(M) \subseteq \Lambda (X)$, we conclude that $\mathcal{A}(M) = \Lambda (\mathcal{A}(M) \cap X)$.

If $M$ is reduced, then $\Lambda (X) = X$, from which the second claim follows.

\smallskip

(2) If $X \subseteq \mathcal{A}(M)$, then every nonidentity element of $M$ can be expressed as a product of atoms, and therefore $M$ is atomic. Also, clearly, if $\Lambda (X) = \mathcal{A}(M)$, then $X \subseteq \mathcal{A}(M)$. Finally, as shown above, if $X \subseteq \mathcal{A}(M)$, then $\Lambda (X) \subseteq \mathcal{A}(M)$, and hence $\Lambda (X) = \mathcal{A}(M)$, by (1).

\smallskip

(3) If $\mathcal{A}(M) = X$, then $M$ is atomic, by (2). Conversely, suppose that $M$ is atomic. Suppose further that there exists $x \in X \setminus \mathcal{A}(M)$. Since $X$ is irredundant, $x \neq_M 1$, and therefore $x \notin M^{\times}$, given that $M$ is reduced. Thus $x$ can be expressed as a product of elements of $\mathcal{A}(M)$, necessarily distinct from $x$. But then, since $M$ is reduced, (1) implies that $X \setminus \{x\}$ generates $M$ as a monoid, contradicting the hypothesis that $X$ is irredundant. Therefore $\mathcal{A}(M) = X$, again, by (1). 
\end{proof}

Considering factorizations into generators, rather than atoms, has the advantage of being applicable to arbitrary monoids, even if they have no atoms at all (e.g., antimatter domains~\cite{A-C-H-Z07, Be-El23a}). Moreover, generator-based factorization can be viewed as a general platform for exploring factorization into building blocks of various sorts, as we describe more explicitly below.

\begin{remark} \label{gen-remark}
Let $M$ be a monoid, let $X \subseteq M \setminus \{1\}$, and let $N$ be the submonoid of $M$ generated by $X$. Then $N \cong \langle X \mid \ker(\phi) \rangle$, where $\phi : \langle X \rangle \to N$ is the homomorphism induced by sending each element of $X$ to the corresponding copy in $N$, and $\phi(1)=1$. Hence taking $X$ to be the set of building blocks for a theory of factorization in $M$ (e.g., primes, atoms, irreducible elements), one can reformulate that theory in terms of generators, upon restricting to the submonoid $N$ consisting of the elements of $M$ that can be factored into products of the relevant building blocks.
\end{remark}

The essence of the idea of restricting to the submonoid generated by factorization building blocks is commonly used in, for example, the Factorization Theory of commutative non-atomic monoids, where factorizations are considered only for elements that can be written as products of atoms. Likewise, using presentations consisting of factorization building blocks is not new. Specifically, the \emph{factorization monoid}, which is the free commutative monoid generated by the atoms of a given commutative monoid, is a commonly-used tool in the factorization literature (see, e.g.,~\cite{P1,P2,BS,FGKT,Ge-Zh20a,GK}). Rather, our framework is a natural development of the earlier approaches, that is particularly well-suited to working in the setting where one typically constructs examples, namely monoids with explicit presentations.

A drawback of factorization into generators, as noted in Section~\ref{factoriz-sect}, is that the various factorization-related notions we have defined depend on the specific presentation of a given monoid. To take an extreme example, the trivial monoid $\{1\}$ is BF with respect to the standard presentation $\langle \emptyset \rangle$, but not with respect to the presentation $\langle x \mid (1,x) \rangle$. Moreover, as with atoms, and other types of factorization building blocks, the generator-based arithmetic indicators are not always informative. For example, suppose that $M := \langle X \mid R \rangle$ is such that $ab =_M 1$, for some $a,b \in \langle X \rangle \setminus \{1\}$. Then for all $c \in \langle X \rangle$ and  $n \in \N$, we have
\[|c| + n|ab| = |c(ab)^n| \in \mathsf{L}_M(c),\]
and so $\mathsf{L}_M(c)$ contains an infinite arithmetic progression with difference $|ab|$. In this situation the system of lengths of $M$ sheds relatively little light on the factorization properties of $M$. Similar problems arise, for example, in semigroups with nontrivial idempotents.

We tame problematic behavior of this sort in many of our results by choosing generating sets with desirable features, such as being irredundant. A more conventional approach is to impose some sort of regularity condition on the monoid, rather than the generators. We do precisely that in Section~\ref{duo-sect}, where we assume, in most of the results, that our monoids are BF. The next lemma, versions of which can be found in various other settings (e.g., \cite[Corollary 2.29]{Fa-Tr18a}, \cite[Example 5.4]{Co-Tr23a}, \cite[Lemma 2.2 and Theorem 2.3]{BBNS}), explains some of the consequences of that assumption.

\begin{lemma} \label{unit-canc-lem}
Let $M := \langle  X \mid  R \rangle$, and suppose that $M$ is BF. If $a =_M bac$ for some $a,b,c \in \langle X \rangle$, then $b = 1 = c$. In particular, $M$ is acyclic and reduced.
\end{lemma}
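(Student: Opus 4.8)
The plan is to leverage the bounded-factorization hypothesis directly against the length sets. Suppose $a =_M bac$ for some $a, b, c \in \langle X \rangle$, and suppose for contradiction that $b \neq 1$ or $c \neq 1$, so that $|bc| = |b| + |c| \geq 1$. The key observation is that the relation $a =_M bac$ can be iterated: applying it to the inner occurrence of $a$ repeatedly yields $a =_M b^n a c^n$ for every $n \in \N$. Since equality in $M$ is a congruence on $\langle X \rangle$, each such word $b^n a c^n$ lies in $\mathsf{Z}_M(a)$, and hence its length $n|b| + |a| + n|c| = |a| + n(|b| + |c|)$ belongs to $\mathsf{L}_M(a)$.

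First I would make the iteration precise: from $a =_M bac$ one obtains, by substituting the right-hand side into its own central $a$, that $a =_M b(bac)c = b^2 a c^2$, and an easy induction gives $a =_M b^n a c^n$ for all $n \in \N$. Then I would record that $\{\,|a| + n(|b|+|c|) \mid n \in \N\,\} \subseteq \mathsf{L}_M(a)$. Because $|b| + |c| \geq 1$ under the contrary assumption, this is an infinite subset of $\mathsf{L}_M(a)$, contradicting the assumption that $M$ is BF (i.e., that $\mathsf{L}_M(a)$ is finite). Therefore $|b| + |c| = 0$, forcing $b = 1 = c$, which is the main assertion.

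The two ``in particular'' claims then follow quickly. For acyclicity, the definition requires that $a =_M bac$ force $b, c \in M^{\times}$; we have just shown the stronger conclusion $b =_M 1 =_M c$, so in particular $b, c \in M^{\times}$ and $M$ is acyclic. For reducedness, I would argue that a nontrivial unit produces a cycle of the forbidden type: if $u \in M^{\times}$ with $u \neq_M 1$, write $u = x_1 \cdots x_n$ as a word in $X$; then $1 =_M 1 \cdot u \cdot u^{-1}$ with $u$ and $u^{-1}$ expressible as words in $X$, and applying the already-established statement to the element $1$ (taking $a := 1$) shows that both the word representing $u$ and the word representing $u^{-1}$ must equal $1$ in $M$, whence $u =_M 1$, a contradiction. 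Thus $M^{\times} = \{1\}$, i.e., $M$ is reduced.

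I do not anticipate any serious obstacle here; the argument is essentially the translation of the standard ``BF implies no nontrivial inner cycles'' fact to the generator setting, and the only point requiring mild care is the bookkeeping for reducedness, where one must express a hypothetical nontrivial unit and its inverse as genuine words in $\langle X \rangle$ so that the first part of the lemma applies. The substantive content is entirely contained in the iteration step, which is routine once the congruence property of $=_M$ is invoked.
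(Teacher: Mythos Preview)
Your proof is correct and follows essentially the same approach as the paper: iterate $a =_M bac$ to $a =_M b^n a c^n$, read off $|a| + n(|b|+|c|) \in \mathsf{L}_M(a)$, and invoke BF to force $|b|=|c|=0$; then specialize to $a=1$ for reducedness. The paper's treatment of reducedness is the same idea, just phrased more tersely (``if $bc =_M 1$, then $b=1=c$'').
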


\begin{proof}
Let $a,b,c \in \langle X \rangle$, and suppose that $a =_M bac$. Then $a=_M b^nac^n$, and hence 
\[
|b^nac^n| = |a| + n(|b|+|c|) \in \mathsf{L}_M(a),
\] 
for all $n \in \N$. Given that $M$ is BF, this is possible only if $|b| = 0 = |c|$, that is, $b=1=c$. It follows immediately that $M$ is acyclic.

The above argument, in particular, shows that if $bc =_M 1$, then $b = 1 = c$. Thus, $M$ cannot have any nontrivial invertible elements, and is therefore reduced.
\end{proof}

We conclude this section by relating our setting to the preorder-based theory of factorization developed recently by Cossu and Tringali in~\cite{Tr22a,Co-Tr23a,CT,Co-Tr23b,Co-Tr24b}. Letting $M$ be a monoid equipped with a preorder $\preceq$ (i.e., reflexive transitive relation), one can define factorization building blocks with respect to that preorder, such as $\preceq$-\emph{irreducible} elements, $\preceq$-\emph{atoms}, $\preceq$-\emph{quarks}, and $\preceq$-\emph{prime} elements (see~\cite[Definition 3.6]{Tr22a}). For example, $a \in M$ is a $\preceq$-\emph{quark} if $b \prec a$ implies that $b \preceq 1 \preceq b$, for all $b \in M$, and it is not the case that $a \preceq 1 \preceq a$. Various results are then proved about factoring elements into products of the above types of building blocks, depending on properties of the preorder.

By Remark~\ref{gen-remark}, this theory can be situated within our framework (though that may not necessarily be enlightening), by taking the $\preceq$-irreducible elements, or other types of elements mentioned above, as the generators in the presentation of an appropriate submonoid. On the other hand, our approach can be rephrased, in many cases, using the Cossu/Tringali terminology. More specifically, let $M := \langle X \mid R \rangle$, and for all $a,b \in M$ write $a \preceq b$ if and only if $\min (\mathsf{L} (a)) \leq  \min (\mathsf{L} (b))$. Then $\preceq$ is a preorder on $M$ (which is a \emph{pullback}, in the sense of~\cite[Example 3.3(2)]{Tr22a}, of the usual order on $\N$), and the $\preceq$-quarks are precisely the $a \in M$ such that $a =_M x$, for some $x \in X$. Thus, given a reasonably well-behaved generating set $X$, the generators of $M$ can be identified with its $\preceq$-quarks.

\section{Basic Observations} \label{basics-sect}

This section is devoted to general observations about how the shape of $R$ affects factorizations of the elements of $\langle X \mid R \rangle$. We begin by showing that, in a monoid presented by just one relation, factorization tends to be well-behaved.

\begin{proposition} \label{one-relation-prop}
Let $M := \langle X \mid R \rangle$, where $R=\{(e,f)\}$ for some $e,f \in \langle X \rangle$, and let $d := ||e|-|f|| \in \N$. 
\begin{enumerate}[\rm (1)]
\item If $d=0$, then $M$ is half-factorial.

\item If $d\neq 0$, then $\Delta(M) = \{d\}$, and $\, \mathsf{L}_M(a)$ is an arithmetic progression with difference $d$, for all $a \in \langle X \rangle$.
\end{enumerate}
\end{proposition}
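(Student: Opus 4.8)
The plan is to analyze how a single application of the relation $(e,f)$ changes the length of a word. Observe that if $a, a' \in \langle X \rangle$ and $a'$ is obtained from $a$ by replacing one occurrence of the subword $e$ with $f$ (or vice versa), then $a =_M a'$ and $||a| - |a'|| = d$. Conversely, I claim that any two words representing the same element of $M$ are connected by a finite chain of such single-step rewrites. This is the standard description of $=_M$ for a monoid presented by a symmetric relation: $=_M$ is the congruence generated by $R$, which is exactly the reflexive-transitive-symmetric closure of the one-step rewriting relation $a = c e c' \leftrightarrow c f c'$. (I will need $R$ here to be treated as symmetric, i.e. that both $(e,f)$ and $(f,e)$ generate the congruence, which is automatic in the monoid $\langle X \mid R \rangle$.)

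First, for part (1), suppose $d = 0$, i.e. $|e| = |f|$. Then each single-step rewrite preserves length, so any $b \in \mathsf{Z}_M(a)$ has $|b| = |a|$, giving $\mathsf{L}_M(a) = \{|a|\}$ and hence $M$ is half-factorial.

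Next, for part (2), suppose $d \neq 0$; without loss of generality $|e| > |f|$, so $|e| - |f| = d$. Each single-step rewrite changes the length of a word by exactly $\pm d$. Therefore, along any chain of rewrites connecting two factorizations of the same element, every intermediate length is congruent mod $d$; in particular all elements of $\mathsf{L}_M(a)$ lie in a single residue class mod $d$, which already forces $\Delta(M) \subseteq \{d\}$ and that every $\mathsf{L}_M(a)$ is contained in an arithmetic progression with difference $d$. The key remaining point is to show $\mathsf{L}_M(a)$ is an \emph{unbroken} arithmetic progression with difference exactly $d$, i.e. that $\mathsf{L}_M(a)$ contains no gaps larger than $d$. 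For this I would take two factorizations $b, b' \in \mathsf{Z}_M(a)$ with $|b| < |b'|$ and a connecting chain of single-step rewrites; since consecutive words in the chain differ in length by exactly $d$, the set of lengths appearing along the chain is a subset of $\{|b|, |b|+d, \ldots, |b'|\}$ whose consecutive entries jump by $\pm d$. A discrete intermediate-value argument then shows that every value $|b| + jd$ with $0 \le |b| + jd \le |b'|$ is actually attained by some word in the chain, hence lies in $\mathsf{L}_M(a)$. This establishes that $\mathsf{L}_M(a)$ is a full arithmetic progression with difference $d$. Applying this to an $a$ for which $\mathsf{L}_M(a)$ has at least two elements (e.g. $a = e$, which gives $\{|e|, |f|\}$, nonempty and with two distinct values since $d \neq 0$) yields $d \in \Delta(M)$, so $\Delta(M) = \{d\}$.

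The main obstacle is the intermediate-value step: one must verify that no length in the progression is skipped. This requires the precise observation that a single rewrite moves the length by \emph{exactly} $\pm d$ (not an arbitrary multiple), so that a chain running from $|b|$ up to $|b'|$ cannot leap over an intermediate admissible value without passing through it — formally, if consecutive terms of an integer sequence differ by $\pm d$ and the sequence runs from its minimum to its maximum, every value of the form (min) $+ jd$ in between is hit. Care is needed to articulate the correspondence between $=_M$ and the one-step rewriting relation cleanly, since that is what legitimizes reasoning about factorizations purely through length changes under single applications of the relation.
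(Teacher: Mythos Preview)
Your proof is correct and follows essentially the same approach as the paper: both reduce to a chain of elementary $R$-transitions, observe that each step changes the length by exactly $\pm d$, and then apply a discrete intermediate-value argument to conclude that $\mathsf{L}_M(a)$ is a full arithmetic progression with difference $d$. One minor misstatement to clean up: the lengths appearing along the chain need not be a subset of $\{|b|, |b|+d, \ldots, |b'|\}$, since the chain may overshoot either endpoint, but your intermediate-value conclusion is valid regardless (the paper sidesteps this by first replacing $a$ with a minimum-length representative, so the chain cannot dip below $|a|$).
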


\begin{proof}
(1) Obvious.

\smallskip

(2) Suppose that $d\neq 0$, and let $a \in \langle X \rangle$. If $\mathsf{Z}_M(a) = \{a\}$, then $\mathsf{L}_M(a) = \{|a|\}$ is vacuously an arithmetic progression (with difference $d$). Let us therefore suppose that there exists $b \in \mathsf{Z}_M(a) \setminus \{a\}$. Upon replacing $a$ with a suitable element of $\mathsf{Z}_M(a)$, we may also assume, without loss of generality, that $|a| = \min(\mathsf{L}_M(a))$.

Since $a =_M b$ and $a \neq b$, there are $m \in \N^+$ and $c_0, \dots, c_m \in \langle X \rangle$ such that $c_0 = b$, $c_m = a$, and each $c_i$ is connected to $c_{i+1}$ via an elementary $R$-transition. That is, for each $i<m$, there exist $d_i, g_i, g_i', h_i \in \langle X \rangle$ satisfying $(g_i, g_i') \in \{(e,f), (f,e)\}$, $c_i = d_ig_ih_i$, and $c_{i+1} = d_ig_i'h_i$. (See~\cite[Proposition 1.5.9]{H} for more details.) Thus $||c_{i+1}| - |c_{i}|| = d$ for each $i$, and $\Delta(\mathsf{L}_M(a)) = \{d\}$. Moreover, given that $|a| = \min(\mathsf{L}_M(a))$, and hence $|a| \leq |b|$, we have
\[
\{|a|, |a| + d, |a| + 2d, \dots, |b|\} \subseteq \mathsf{L}_M(a) \cap (|a| + d\N).
\]
Since $b \in \mathsf{Z}_M(a)\setminus \{a\}$ was arbitrary, it follows that $\mathsf{L}_M(a)$ is an arithmetic progression with difference $d$.

Finally, since $\Delta(\mathsf{L}_M(a)) \subseteq \{d\}$ for all $a \in \langle X \rangle$, we conclude that $\Delta(M) = \{d\}$.
\end{proof}

It follows easily from Proposition~\ref{one-relation-prop} that a one-relation monoid must satisfy the Structure Theorem for Unions. A monoid presented by two or more relations, however, need no longer do so, as we show in Proposition~\ref{no-structure-eg}. Note also that the arguments in the proof of Proposition~\ref{one-relation-prop} work equally well for commutative one-relation monoids.

While one-relation monoids are well-behaved, in the above sense, they can exhibit some interesting factorization behaviors none-the-less. Specifically, we show in Proposition~\ref{elasticity-eg} that such a monoid need not have accepted elasticity, even when the elasticity is finite. Additionally, in Lemma~\ref{eg-lem} we construct a family of one-relation monoids $M$ having the unusual property that the differences $\rho_k(M) - \rho_{k-1}(M)$ are finite but unbounded. We also note that a (commutative) one-relation monoid need not be fully elastic (cf.\ Theorem~\ref{fully-elastic-theorem} below). Specifically, if $n_1, n_2 \in \N$ satisfy $1 < n_1 < n_2$ and $\gcd (n_1, n_2)=1$, then the submonoid $M$ of $(\N,+)$ generated by $\{n_1, n_2\} = \mathcal{A}(M)$ is not fully elastic (see \cite[Theorem 5.5]{Ge-Sc-Zh17b} for more details). More generally speaking, one-relation monoids have historically served as a rich source of interesting questions--see, e.g.,~\cite{NB}.

In the next lemma we gather general facts about how the relations in a presentation for a monoid influence various arithmetic indicators of factorization.

\begin{lemma} \label{basics-lemma}
Let $M := \langle X \mid R \rangle$, where $R$ is symmetric, and let $N$ be the additive submonoid of $\Z$ generated by $\, \{|a|-|b| \mid (a,b)\in R\}$.
\begin{enumerate}[\rm (1)]
\item If $a,b \in \langle X \rangle$ satisfy $a=_M b$, then $\, |a|-|b| \in N$.

\item Let $a,b \in \langle X \rangle$ be such that $a=_M b$, $|b| < |a|$, and for all $c \in \mathsf{Z}_M(a)$ it is not the case that $\, |b| < |c| < |a|$. Then there exist $e,f \in \langle X \rangle$ such that $(e,f) \in R$ and $\, |a| - |b| \leq |e|-|f|$.

\item For all $n \in N$, there exist $a,b \in \langle X \rangle$ such that $a=_M b$ and $n = |a|-|b|$.

\item If $N \neq \{0\}$, then $N = d\Z$, where $d = \gcd (N) = \min(\Delta (M)) = \gcd(\Delta (M))$.
\end{enumerate}
\end{lemma}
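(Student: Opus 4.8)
The plan is to prove the four parts essentially in the order given, since each builds on the previous ones. The key structural fact is that two words $a,b \in \langle X \rangle$ satisfy $a =_M b$ precisely when they are connected by a finite chain of elementary $R$-transitions, as recalled in the proof of Proposition \ref{one-relation-prop} (cf.\ \cite[Proposition 1.5.9]{H}). Each such transition replaces a subword $g$ by $g'$ with $(g,g') \in R$, and since $R$ is symmetric, the set of achievable length-differences across a single transition is exactly $\{|a|-|b| \mid (a,b)\in R\}$, closed under negation.

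For part (1), I would take a chain $c_0 = a, c_1, \dots, c_m = b$ of elementary $R$-transitions. At step $i$ the length changes by $|g_i| - |g_i'|$ where $(g_i, g_i') \in R$, so each individual change lies in the generating set $\{|a|-|b| \mid (a,b)\in R\}$ of $N$. Telescoping, $|a| - |b| = \sum_{i} (|c_i| - |c_{i+1}|)$ is a sum of elements of that generating set, hence lies in $N$ (here it matters that $N$ is defined as an additive submonoid of $\Z$, so it is closed under the sums that arise, including negative contributions). Part (3) is the converse direction and is the easiest: given $n \in N$, write $n$ as a $\Z^{+}$-combination of the generators $|e_j| - |f_j|$ with $(e_j,f_j)\in R$; then concatenating the corresponding words $e_j$ (for positive coefficients) versus $f_j$ realizes $n$ as $|a|-|b|$ for explicit $a =_M b$, using that concatenation of $R$-equal words stays $R$-equal and lengths add.

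For part (2), the hypothesis says $b$ is a ``length-predecessor'' of $a$ in $\mathsf{Z}_M(a)$: there is no factorization of strictly intermediate length. I would again use the connecting chain from $a$ to $b$ and locate the transition that first drops the length below $|a|$; because no intermediate length is attained, a single transition must account for the entire drop from (something of length $\ge |a|$) down past $|b|$, which forces the relation used at that step to have length-difference at least $|a|-|b|$. The mild subtlety here is that the chain may wander up in length before coming down, so I would argue by choosing, along the chain, the last index at which the length is $\ge |a|$ and examining the following transition; the intermediate-length hypothesis then pins down the required inequality $|a|-|b| \le |e|-|f|$ for the relation $(e,f)\in R$ used there. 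I expect part (2) to be the main obstacle, precisely because controlling the behavior of the chain (which is not monotone in length) against the ``no intermediate length'' condition requires care.

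For part (4), assume $N \neq \{0\}$. Since $N$ is a nonzero additive submonoid of $\Z$ that by part (1)/(3) contains both signs of each nonzero generator, it is a subgroup, hence $N = d\Z$ with $d = \gcd(N)$. It remains to identify $d$ with $\min \Delta(M)$ and $\gcd \Delta(M)$. For any $a$, the distances of $\mathsf{L}_M(a)$ arise as gaps between consecutive attained lengths; by part (2) each such gap is realized by a single relation of length-difference $\ge$ the gap, and conversely part (1) forces every length-difference, hence every distance, to be a multiple of $d$, giving $\Delta(M) \subseteq d\N^{+}$ and therefore $d \mid \gcd \Delta(M)$ and $d \le \min \Delta(M)$. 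For the reverse, I would exhibit a length set witnessing the distance $d$ itself: since $d \in N$, parts (1) and (3) produce $a =_M b$ with $|a| - |b| = d$ and, by minimality of $d$, no intermediate length can occur, so $d \in \Delta(M)$, forcing $\min \Delta(M) \le d$ and $\gcd \Delta(M) \mid d$. Combining the divisibilities yields $d = \gcd(N) = \min \Delta(M) = \gcd \Delta(M)$.
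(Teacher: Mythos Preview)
Your plan is correct and follows essentially the same approach as the paper: parts (1)--(3) are handled identically via chains of elementary $R$-transitions and concatenation of relation words, and in part (2) your ``last index with length $\ge |a|$'' is the mirror of the paper's ``least index with $|c_{i+1}| \ge |a|$'' (the paper runs the chain from $b$ to $a$). The only noteworthy difference is in part (4): the paper obtains $\min \Delta(M) = \gcd \Delta(M)$ by invoking \cite[Proposition 2.9]{FGKT} for directed families, whereas you argue directly that the witnesses $a =_M b$ with $|a|-|b|=d$ from (3) can have no intermediate length (since any such length would give a positive element of $N$ smaller than $d$), hence $d \in \Delta(M)$ outright---a slightly more self-contained route that avoids the external citation.
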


\begin{proof}
(1) Let $a,b \in \langle X \rangle$ be such that $a=_M b$. If $a = b$, then $|a|-|b| = 0 \in N$, and so we may assume that $a \neq b$. Then there must exist $m \in \N^+$ and $c_0, \dots, c_m \in \langle X \rangle$ such that $c_0 = b$, $c_m = a$, and each $c_i$ is connected to $c_{i+1}$ via an elementary $R$-transition~\cite[Proposition 1.5.9]{H}. Given that $R$ is symmetric, this means that for each $i<m$ there exist $d_i, e_i, e_i', f_i \in \langle X \rangle$, such that $(e_i, e_i') \in R$, $c_i = d_ie_if_i$, and $c_{i+1} = d_ie_i'f_i$. Then 
\[
|a|-|b| = |c_m| - |c_0| = \sum_{i=0}^{m-1} (|c_{i+1}| - |c_i|) = \sum_{i=0}^{m-1} (|e_i'|-|e_i|) \in N.
\]

\smallskip

(2) As above, there exist $m \in \N^+$, $c_0, \dots, c_m \in \mathsf{Z}_M(a) \subseteq \langle X \rangle$, and $d_i, e_i, e_i', f_i \in \langle X \rangle$ (for each $i<m$), such that $c_0 = b$, $c_m = a$, $(e_i, e_i') \in R$, $c_i = d_ie_if_i$, and $c_{i+1} = d_ie_i'f_i$. Since $|b| < |c_m|$, and since for all $i \in \{0, \dots, m\}$, either $|c_i| \leq |b|$ or $|a| \leq |c_i|$, there is a least $i \in \{0, \dots, m-1\}$ such that $|a|\leq |c_{i+1}|$. Then $|c_i| \leq |b|$, and so
\[
|a|-|b| \leq |c_{i+1}|-|c_i| = |d_ie_i'f_i| - |d_ie_if_i| = |e_i'|-|e_i|,
\]
as desired.

\smallskip

(3) Let $n \in N$. Since $0 = |1|-|1|$ (where $1 \in \langle X \rangle$), we may assume that $n \neq 0$. Then $n = \sum_{i=1}^k m_i(|c_i|-|d_i|)$, for some $k,m_i \in \N^+$ and $(c_i,d_i) \in R$. Letting $a := c_1^{m_1} \cdots c_k^{m_k} \in \langle X \rangle$ and $b := d_1^{m_1} \cdots d_k^{m_k} \in \langle X \rangle$, we have $a=_M b$ and $n = |a|-|b|$.

\smallskip

(4) Suppose that $N \neq \{0\}$. Since $R$ is symmetric, $N$ is closed under taking additive inverses, and is therefore a subgroup of $\Z$. Thus, by a standard argument, $N = d\Z$, where $d$ is the minimal element of $N \cap \N^+$, given that $N \neq \{0\}$. Necessarily $d = \gcd(N \cap \N) = \gcd (N)$. 

Since, by (1), $\Delta (M) \subseteq N$, we must have $d \leq \min (\Delta (M))$. Since $d \in N$, by (3), there exist $a,b \in \langle X \rangle$ such that $a=_M b$ and $d = |a|-|b|$, which implies that $\min (\Delta (M)) \leq d$. Thus $d = \min (\Delta (M))$. 

According to~\cite[Proposition 2.9]{FGKT}, $\min(\Delta (\mathcal{L})) = \gcd(\Delta (\mathcal{L}))$, for any directed collection $\mathcal{L}$ of subsets of $\N$. Since $N \neq \{0\}$ implies that $X \neq \emptyset$, the system of lengths $\mathcal{L}(M)$ is indeed directed (see Section~\ref{factoriz-sect}), and so we conclude that $d = \gcd(\Delta (M))$.
\end{proof}

The following is a generalization of~\cite[Proposition 6]{G}.

\begin{corollary} \label{fin-delta}
Let $M := \langle X \mid R \rangle$, where $\, \{|a|-|b| \mid (a,b)\in R\}$ is finite. Then $\Delta(M)$ is also finite.
\end{corollary}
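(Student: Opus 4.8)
The plan is to deduce this directly from Lemma~\ref{basics-lemma}(2), which bounds the size of gaps in length sets by the length-differences appearing in the relations. The one wrinkle is that Lemma~\ref{basics-lemma} assumes $R$ is symmetric, whereas no such hypothesis is imposed here, so I would first symmetrize. Replacing $R$ by its symmetric closure $R' := R \cup \{(b,a) \mid (a,b) \in R\}$ does not change the monoid $M$, and the associated set of length-differences becomes $\{|a|-|b| \mid (a,b) \in R'\} = \{\pm(|a|-|b|) \mid (a,b) \in R\}$, which is still finite. Hence, without loss of generality, I may assume that $R$ is symmetric and that $\{|a|-|b| \mid (a,b) \in R\}$ remains finite.

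Next I would translate an arbitrary distance into a gap between two specific factorizations. Suppose $d \in \Delta(M)$; then $d \in \Delta(\mathsf{L}_M(w))$ for some $w \in \langle X \rangle$, meaning there is some $k \in \mathsf{L}_M(w)$ with $[k, k+d] \cap \mathsf{L}_M(w) = \{k, k+d\}$. I would then choose factorizations $b, a \in \mathsf{Z}_M(w)$ with $|b| = k$ and $|a| = k+d$. These satisfy $a =_M b$ (both equal $w$ in $M$) and $|b| < |a|$, and since $\mathsf{Z}_M(a) = \mathsf{Z}_M(w)$, the gap condition says precisely that no $c \in \mathsf{Z}_M(a)$ has $|b| < |c| < |a|$. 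These are exactly the hypotheses of Lemma~\ref{basics-lemma}(2), which therefore produces a pair $(e,f) \in R$ with $d = |a| - |b| \leq |e| - |f|$.

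Finally I would invoke the finiteness hypothesis. If $\Delta(M) = \emptyset$ there is nothing to prove, so assume otherwise; then $R \neq \emptyset$, and $D := \max\{|e|-|f| \mid (e,f) \in R\}$ is a well-defined integer, necessarily positive since the previous paragraph shows some relation strictly increases length. That paragraph gives $d \leq D$ for every $d \in \Delta(M)$, whence $\Delta(M) \subseteq \{1, \dots, D\}$ is finite, as claimed. The substance of the argument lives entirely in Lemma~\ref{basics-lemma}(2); the only genuine step here is recognizing that an arbitrary distance can be realized as a gap between two factorizations of a single element, so that part~(2) applies. I do not anticipate any real obstacle beyond carefully checking that symmetrization preserves both $M$ and the finiteness of the set of length-differences.
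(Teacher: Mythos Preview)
Your proof is correct and follows essentially the same approach as the paper: symmetrize $R$, then invoke Lemma~\ref{basics-lemma}(2) to bound every distance by the maximum of $\{|e|-|f| \mid (e,f) \in R\}$. You are a bit more explicit than the paper in setting up the hypotheses of Lemma~\ref{basics-lemma}(2) from an arbitrary distance, but the substance is identical.
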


\begin{proof}
Letting $\overline{R}$ denote the symmetric closure of $R$, we have $\langle X \mid R \rangle = \langle X \mid \overline{R} \rangle$. Moreover, $\{|a|-|b| \mid (a,b)\in R\}$ is finite if and only if $\{|a|-|b| \mid (a,b)\in \overline{R}\}$ is finite. So, upon replacing $R$ with $\overline{R}$, we may assume, without loss of generality, that $R$ is symmetric.

Now, let 
\[
m := \max \{|a|-|b| \mid (a,b)\in R\} \in \N.
\] 
Then, by Lemma~\ref{basics-lemma}(2), $n \leq m$ for all $a \in \langle X \rangle$ and $n \in \Delta(\mathsf{L}_M(a))$. Thus $\Delta(M) \subseteq [0, m]$.
\end{proof}

It can be difficult to determine whether the sets $\mathcal{U}_k(M)$ are finite, for a monoid $M:=\langle  X \mid R \rangle$. However, if either $X$ or $R$ is finite, then the $\mathcal{U}_k(M)$ being finite amounts to each element of $M$ having only finitely many distinct factorizations, as we show next.

\begin{proposition} \label{fin-elasticity-prop}
Let $M := \langle X \mid R \rangle$, and suppose that
\[Y := \{x \in X \mid \exists a,b,c \in \langle X \rangle \text{ such that } (axb,c) \in R \text{ or }(c,axb) \in R\}.\]
is finite. Then the following statements are equivalent.
\begin{enumerate}[\rm (1)]
\item There exists $a \in \langle X \rangle$ such that $\, \mathsf{Z}_M(a)$ is infinite.

\item There exists $a \in \langle X \rangle$ such that $\, \mathsf{L}_M(a)$ is infinite.

\item There exists $k \in \N$ such that $\mathcal{U}_k(M)$ $($as well as $\rho_k(M)$$)$ is infinite.

\item There exists $k \in \N$ such that $\mathcal{U}_n(M)$ $($as well as $\rho_k(M)$$)$ is infinite for all $n \geq k$.
\end{enumerate}
\end{proposition}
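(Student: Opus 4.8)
The plan is to prove the proposition by establishing the cycle of implications $(4) \Rightarrow (3) \Rightarrow (2) \Rightarrow (1) \Rightarrow (4)$. Three of these are essentially immediate. For $(4) \Rightarrow (3)$, we simply specialize $n = k$. For $(3) \Rightarrow (2)$, observe that if $\mathcal{U}_k(M)$ is infinite, then since $\mathcal{U}_k(M) = \bigcup\{\mathsf{L}_M(a) \mid k \in \mathsf{L}_M(a)\}$ is a union of length sets, and $\mathcal{L}(M)$ is subadditive (hence directed when $X \neq \emptyset$), some single length set must already be infinite; indeed, by subadditivity any two length sets containing $k$ embed into a common length set, so all length sets containing $k$ are contained in one length set $\mathsf{L}_M(a)$, which is then infinite. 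For $(2) \Rightarrow (1)$, note that $\mathsf{L}_M(a)$ is the image of $\mathsf{Z}_M(a)$ under the length map, so an infinite length set forces an infinite factorization set.

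The substantive implication is $(1) \Rightarrow (4)$, and this is where the finiteness of $Y$ enters. The key observation is that an elementary $R$-transition $c = d\,g\,h \leadsto d\,g'\,h$ only rewrites a subword $g$ (or $g'$) that appears as the first coordinate of some relation (up to symmetry). By the definition of $Y$, every letter occurring inside such a rewritable subword lies in $Y$. Consequently, the letters of $a$ that lie \emph{outside} $Y$, namely those in $X \setminus Y$, can never be touched by any rewriting: they are inert. The plan is to argue that if $\mathsf{Z}_M(a)$ is infinite, then we can produce a single element whose factorizations have \emph{unbounded} lengths, which (via the subadditivity/directedness machinery) spreads to all sufficiently large $k$.

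Here is the mechanism I would use for $(1) \Rightarrow (4)$. Suppose $\mathsf{Z}_M(a)$ is infinite. Since each rewriting step preserves the multiset of occurrences of letters from $X \setminus Y$, every factorization $b \in \mathsf{Z}_M(a)$ has the same total number of inert letters, so the inert part contributes a fixed bounded length; the unboundedness (if $\mathsf{L}_M(a)$ is infinite) or the infinitude of $\mathsf{Z}_M(a)$ must come from the $Y$-letters. Because $Y$ is finite, there are only finitely many words of each fixed length in the letters of $Y$; therefore infinitely many factorizations forces the $Y$-content of the factorizations to have unbounded length, i.e.\ $\mathsf{L}_M(a)$ is unbounded, giving an element whose length set is infinite. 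Once we have an element $a$ with $\mathsf{L}_M(a)$ infinite, I would invoke directedness of $\mathcal{L}(M)$: fixing some $k_0 \in \mathsf{L}_M(a)$ and any $x \in X$, the sums $\mathsf{L}_M(a) + \mathsf{L}_M(x^j)$ embed into $\mathsf{L}_M(ax^j)$, which produces, for every sufficiently large $k$, a length set containing $k$ that is infinite, yielding $\mathcal{U}_n(M)$ infinite for all $n \geq k$ and hence statement (4).

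The main obstacle I anticipate is making the ``inert letters are untouched, so infinitude comes from the finite alphabet $Y$'' argument fully rigorous, in particular ruling out the scenario where $\mathsf{Z}_M(a)$ is infinite but all factorizations have bounded length (which would contradict the implication to (2)). The crux is a counting/pigeonhole step: over a finite alphabet $Y$, boundedly many positions can only produce finitely many distinct $Y$-words, and since the inert $(X \setminus Y)$-content is fixed across all factorizations of $a$, infinitely many factorizations of $a$ forces unbounded length. Care must be taken that the inert letters genuinely cannot be created, destroyed, or permuted by $R$-transitions—this follows from the definition of $Y$, since any letter that could appear inside a rewritten subword is by definition placed in $Y$—but spelling this out cleanly, tracking how subwords sit relative to the inert letters, is the delicate part of the write-up.
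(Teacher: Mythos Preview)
Your argument for $(3)\Rightarrow(2)$ is the weak link, and as written it does not work. Subadditivity of $\mathcal{L}(M)$ says only that for $L_1,L_2\in\mathcal{L}(M)$ there is $L_3\in\mathcal{L}(M)$ with $L_1+L_2\subseteq L_3$; it does \emph{not} say $L_1\cup L_2\subseteq L_3$. Likewise, ``directed'' in this paper (Section~\ref{subadd-sect}) means subadditive together with $1\in L$ for some $L$, not the order-theoretic condition that any two members have a common upper bound under inclusion. So from $k\in\mathsf{L}_M(a)\cap\mathsf{L}_M(b)$ you cannot conclude that $\mathsf{L}_M(a)\cup\mathsf{L}_M(b)$ sits inside a single length set, and in particular you cannot deduce from $\mathcal{U}_k(M)$ infinite that some individual $\mathsf{L}_M(a)$ is infinite by this route. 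This step genuinely needs the finiteness of $Y$: up to replacing every letter from $X\setminus Y$ by a single fixed inert letter, there are only finitely many words of length $k$, each with the same length set as the original, so by pigeonhole one of them has an infinite length set. That is exactly the idea the paper uses---but in the contrapositive form $\neg(1)\Rightarrow\neg(4)$.

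In fact the paper runs the cycle in the \emph{opposite} direction, $(1)\Rightarrow(2)\Rightarrow(3)\Rightarrow(4)\Rightarrow(1)$. Your $(1)\Rightarrow(4)$ is essentially the paper's $(1)\Rightarrow(2)$ (the pigeonhole over the finite alphabet $Y$, which you sketch correctly) followed by the paper's $(3)\Rightarrow(4)$ (the $x^{n-k}a$ trick). What you are missing is the analogue of the paper's $(4)\Rightarrow(1)$: assuming every $\mathsf{Z}_M(a)$ is finite and showing each $\mathcal{U}_k(M)$ is finite, again by collapsing $X\setminus Y$ to a single inert letter so that only finitely many length-$k$ words remain. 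If you either (i) reverse your cycle to match the paper's, or (ii) replace your faulty $(3)\Rightarrow(2)$ argument with this $Y$-finiteness pigeonhole, the proof goes through; as it stands, the subadditivity step is a genuine gap.
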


\begin{proof}
$(1) \Rightarrow (2)$ Suppose that $\mathsf{Z}_M(a)$ is infinite for some $a \in \langle X \rangle$. Since the elements of $X \setminus Y$ do not contribute to producing factorizations in $M$ of different lengths, upon replacing $a$ with a subword, we may assume that $\mathsf{Z}_M(a) \subseteq \langle Y \rangle$. Since $Y$ is finite, there are only finitely many words in $\langle Y \rangle$ of each length, and so $\mathsf{Z}_M(a)$ must contain elements of infinitely many different lengths, making $\mathsf{L}_M(a)$ infinite. 

\smallskip

$(2) \Rightarrow (3)$ If $\mathsf{L}_M(a)$ is infinite for some $a \in \langle X \rangle$, then so are $\mathcal{U}_{|a|}(M)$ and $\rho_{|a|}(M)$. 

\smallskip

$(3) \Rightarrow (4)$ This follows from the fact that if $k \in \N$ and $a \in \langle X \rangle$ are such that $|a|=k$, then 
\[
(n-k) + \mathsf{L}_M (a) \subseteq \mathsf{L}_M (x^{n-k}a) \subseteq \mathcal{U}_n(M)
\]
for all $n \geq k$ and $x \in X$.

\smallskip

$(4) \Rightarrow (1)$ We shall prove the contrapositive. Suppose that $\mathsf{Z}_M(a)$ is finite for all $a \in \langle X \rangle$, and let $k \in \N$. Also, let $\{a_i \mid i \in I\}$ be the set of all elements of $\langle X \rangle$ of length $k$. If $X$ is finite, then so is $I$. In that case $\bigcup_{i \in I} \mathsf{Z}_M(a_i)$, and hence also $\bigcup_{i \in I} \mathsf{L}_M(a_i) = \mathcal{U}_k(M)$, is finite.

So suppose that $X$ is infinite, and let $x \in X \setminus Y$. For each $i \in I$, let $b_i \in \langle Y \cup \{x\} \rangle$ be the word obtained from $a_i$ by replacing every occurrence of every element of $X \setminus Y$ with $x$. Then $|b_i| = |a_i|$ and $\mathsf{L}_M(b_i) = \mathsf{L}_M(a_i)$, for each $i \in I$. Since $Y \cup \{x\}$ is finite, so is $\{b_i \mid i \in I\}$, and hence also $\bigcup_{i \in I} \mathsf{Z}_M(b_i)$. Thus 
\[
\bigcup_{i \in I} \mathsf{L}_M(b_i) =\bigcup_{i \in I} \mathsf{L}_M(a_i) = \mathcal{U}_k(M)
\] 
must be finite as well.
\end{proof}

There is extensive literature (e.g., \cite{Zh19a, GK, Re26a, Bo-Co-Kh26a}) exploring the question of which (atomic) monoids and rings are fully elastic, but so far it has been confined to the commutative setting. In our next result we construct a large class of non-commutative fully elastic monoids.

\begin{theorem} \label{fully-elastic-theorem}
Let $M := \langle X \mid R \rangle$, and let 
\[
Y := \{x \in X \mid \exists a,b,c \in \langle X \rangle \text{ such that } (axb,c) \in R \text{ or }(c,axb) \in R\}.
\]
If $M$ has accepted elasticity, and $X\neq Y$, then $M$ is fully elastic.
\end{theorem}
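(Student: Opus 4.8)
The plan is to use a generator $x_0 \in X \setminus Y$ (which exists precisely because $X \neq Y$) as a free letter with which to dilute the extremal elasticity down to any prescribed rational value. The role of the hypothesis $x_0 \notin Y$ is that $x_0$ occurs in no left- or right-hand side of a relation in $R$, so every elementary $R$-transition replaces an $x_0$-free subword by an $x_0$-free subword. Consequently the number of occurrences of $x_0$ is invariant under $=_M$, and the operation of deleting all occurrences of $x_0$ from a word descends to a well-defined monoid homomorphism $\pi \colon M \to M'$, where $X' := X \setminus \{x_0\}$ and $M' := \langle X' \mid R \rangle$. I would first record these two facts by checking them on a single $R$-transition $dehf \mapsto de'hf$ with $(e,e') \in R \cup R^{-1}$, using that $e$ and $e'$ are $x_0$-free. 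I would also note that a factorization of an $x_0$-free element is automatically $x_0$-free, so the length sets of such elements may be computed inside $M'$.

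The key length-set identity I would then establish is that for every $v \in \langle X' \rangle$ and every $k \in \N$,
\[
\mathsf{L}_M(v x_0^k) = \mathsf{L}_M(v) + k .
\]
The inclusion $\supseteq$ is immediate by appending $x_0^k$ to a factorization of $v$; for $\subseteq$, any $b =_M v x_0^k$ has exactly $k$ occurrences of $x_0$ and its deletion $\pi(b)$ satisfies $\pi(b) =_M v$, whence $|b| = |\pi(b)| + k \in \mathsf{L}_M(v) + k$. The same deletion argument gives $\mathsf{L}_M(a) \subseteq \mathsf{L}_M(\pi(a)) + k$, where $k$ is the number of $x_0$'s in $a$; writing $\hat m, \hat M$ for the least and greatest elements of $\mathsf{L}_M(\pi(a)) \cap \N^+$, the extreme lengths of $\mathsf{L}_M(a)$ then lie in $[\hat m + k, \hat M + k]$, so $\rho(\mathsf{L}_M(a)) \le (\hat M + k)/(\hat m + k) \le \hat M / \hat m = \rho(\mathsf{L}_M(\pi(a)))$. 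In particular, if $w^*$ witnesses the accepted elasticity of $M$, then so does $\pi(w^*)$, and I may assume from the outset that the witness $w^*$ lies in $\langle X' \rangle$.

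With such a witness fixed, write $m := \min(\mathsf{L}_M(w^*) \cap \N^+)$ and $M_0 := \sup(\mathsf{L}_M(w^*) \cap \N^+)$, so that $\rho(M) = M_0/m < \infty$. The crucial use of maximality is that $\rho(\mathsf{L}_M((w^*)^n)) \le \rho(M)$ for every $n \in \N^+$, while subadditivity of $\mathcal{L}(M)$ places $nm$ and $nM_0$ in $\mathsf{L}_M((w^*)^n)$; combining $\min \le nm$, $\sup \ge nM_0$, and $\sup/\min \le M_0/m$ pins the extremes down exactly to $\min(\mathsf{L}_M((w^*)^n) \cap \N^+) = nm$ and $\sup(\mathsf{L}_M((w^*)^n) \cap \N^+) = nM_0$. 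Feeding this into the identity above gives
\[
\rho\big(\mathsf{L}_M((w^*)^n x_0^k)\big) = \frac{nM_0 + k}{nm + k}.
\]
Given a target $q \in \Q$ with $1 < q < M_0/m$, solving $q = (nM_0+k)/(nm+k)$ yields $k = n(M_0 - qm)/(q-1)$, a positive rational multiple of $n$; choosing $n$ to be the denominator of $(M_0 - qm)/(q-1)$ in lowest terms makes $k$ a positive integer, so $(w^*)^n x_0^k$ has length set of elasticity exactly $q$. As $q$ was an arbitrary rational in $(1,\rho(M))$ (the case $\rho(M) \le 1$ being vacuous), this shows $M$ is fully elastic.

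I expect the main obstacle to be the bookkeeping justifying the splitting $\mathsf{L}_M(v x_0^k) = \mathsf{L}_M(v) + k$ together with the reduction to an $x_0$-free witness: one must verify carefully that interspersed copies of $x_0$ neither block nor create $R$-transitions, which is exactly where $x_0 \notin Y$ enters, and that the length set of an $x_0$-free element is computed within $M'$. By contrast, once the extremal identities $\min \mathsf{L}_M((w^*)^n) = nm$ and $\sup \mathsf{L}_M((w^*)^n) = nM_0$ are in hand, the remainder is a routine Diophantine interpolation.
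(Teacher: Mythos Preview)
Your proposal is correct and follows essentially the same route as the paper: pick a free generator $x_0\in X\setminus Y$, establish the shift identity $\mathsf{L}_M(x_0^k v)=k+\mathsf{L}_M(v)$, use accepted elasticity to pin down $\min\mathsf{L}_M((w^*)^n)=nm$ and $\max\mathsf{L}_M((w^*)^n)=nM_0$, and then solve a linear Diophantine problem to hit a prescribed rational $q$. The only noteworthy differences are cosmetic: the paper asserts the shift identity $\mathsf{L}_M(x^ka)=k+\mathsf{L}_M(a)$ for \emph{all} $a\in\langle X\rangle$ (not just $x_0$-free ones) and therefore does not need your reduction of the witness to an $x_0$-free word via the deletion map $\pi$; and the paper parametrizes by writing $q=r/s$ and taking $n=r-s$, $k=sM_0-rm$, which is algebraically equivalent to your choice of $n$ as the denominator of $(M_0-qm)/(q-1)$.
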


\begin{proof}
Suppose that $M$ has accepted elasticity, and that there exists $x \in X\setminus Y$. Then, in particular, $X \neq \emptyset$, and so $\rho(M) \geq 1$. If $\rho (M)=1$, then $M$ is fully elastic vacuously, and so we may assume that $\rho (M) > 1$. By the choice of $x$,
\begin{equation} \label{calc3}
\mathsf{L}_M (x^ka) = k + \mathsf{L}_M (a),
\end{equation}
for all $a \in \langle X \rangle$ and $k \in \N$.

Next, for all $a \in \langle X \rangle \setminus \{1\}$ and $k \in \N^+$, we have
\begin{equation} \label{calc4}
\max (\mathsf{L}_M (a^k)) \geq k \max (\mathsf{L}_M(a)) \quad \text{and} \quad \min (\mathsf{L}_M (a^k)) \leq k \min (\mathsf{L}_M (a)),
\end{equation}
which implies that
\begin{equation} \label{calc1}
\rho (\mathsf{L}_M (a^k)) = \frac{\max (\mathsf{L}_M (a^k))}{\min (\mathsf{L}_M (a^k))} \geq \frac{k \max(\mathsf{L}_M(a))}{k \min (\mathsf{L}_M(a))} = \rho (\mathsf{L}_M (a)).
\end{equation}
Since $M$ has accepted elasticity and $\rho(M) \neq 0$, there exists $c \in \langle X \rangle \setminus \{1\}$ such that $\rho (M) = \rho (\mathsf{L}_M(c))$. Then $\rho (\mathsf{L}_M(c^k)) = \rho (\mathsf{L}_M(c))$ for all $k \in \N^+$, and so, using~\eqref{calc4} and~\eqref{calc1}, we conclude that 
\begin{equation} \label{calc2}
\max(\mathsf{L}_M(c^k)) = k \max (\mathsf{L}_M (c)) \quad \text{and} \quad \min (\mathsf{L}_M (c^k)) = k \min (\mathsf{L}_M(c)).
\end{equation}

Finally, let $q \in \mathbb Q$ be such that $1 < q < \rho (M)$, write $q = r/s$ for some $r,s \in \N^+$, and set 
\[
n := r - s, \quad  k := s \max (\mathsf{L}_M(c)) - r \min (\mathsf{L}_M(c)), \quad \text{and} \quad b := x^kc^n.
\]
Then, by~\eqref{calc3} and~\eqref{calc2},
\[
\max (\mathsf{L}_M(b)) = k + n \max(\mathsf{L}_M(c)) \quad \text{and} \quad \min (\mathsf{L}_M(b)) = k + n \min (\mathsf{L}_M(c)).
\]     
Putting it all together, we obtain
\[
\begin{aligned}
\rho (\mathsf{L}_M(b) ) & = \frac{\max (\mathsf{L}_M(b))}{\min (\mathsf{L}_M(b))} = \frac{k + n \max (\mathsf{L}_M(c))}{k + n \min(\mathsf{L}_M (c))} \\
 & = \frac{(r-s)\max (\mathsf{L}_M(c)) + s \max (\mathsf{L}_M(c)) - r \min (\mathsf{L}_M(c))}{(r-s)\min(\mathsf{L}_M(c)) + s \max (\mathsf{L}_M(c)) - r \min (\mathsf{L}_M(c))} \\
 & = \frac{r (\max (\mathsf{L}_M (c)) - \min (\mathsf{L}_M (c)))}{s (\max (\mathsf{L}_M(c)) - \min (\mathsf{L}_M (c)))} \\
 & = \frac{r}{s} = q,
\end{aligned}
\]
and so $M$ is fully elastic.
\end{proof}

\section{Normalizing Monoids} \label{duo-sect}

The main goal of this section is to show that finitely-presented normalizing cancellative monoids satisfy the Structure Theorem for Unions. We begin by recalling notions and basic facts related to normalizing monoids.

\begin{definition}
Let $M$ be a monoid. An element $a \in M$ is called \emph{normal} $($or \emph{invariant}$)$ if $aM = Ma$. We denote by $\, \mathsf{N} (M)$ the set of all normal elements of $M$. If $M = \mathsf{N} (M)$, then $M$ is called \emph{normalizing} $($or \emph{normal} or \emph{duo}$)$.
\end{definition}

It is easy to see that $\mathsf{N} (M)$ is a submonoid of $M$, and $M^{\times} \subseteq \mathsf{N} (M)$, for any monoid $M$. For this reason, $\mathsf{N}(M)$ is referred to as the \emph{normalizing submonoid} of $M$. Given a finitely-generated monoid $M$, the submonoid $\mathsf{N}(M)$ need not be finitely-generated, though that is the case in certain situations--see, e.g., \cite[Corollary 4.4.12]{JO}. Additionally, each normalizing monoid satisfies the left and right Ore conditions (i.e., $aM \cap bM \neq \emptyset$, and $Ma \cap Mb \neq \emptyset$, for all $a,b \in M$), and therefore each cancellative normalizing monoid has a quotient group--see~\cite[Theorems 1.24 and 1.25]{CP} for more details.

There is extensive literature on normal elements in the Ideal Theory and Factorization Theory of monoids and rings--see, e.g., \cite{AM,O,S,Co-Tr23a}. Normalizing \emph{Krull} monoids, in particular, have received attention within the factorization context~\cite{G0,BS}. 

Let us record another interesting fact about normal elements, which has been previously noted in various other settings (see \cite{G0, GS-Tr25a}).

\begin{lemma}
Let $M$ be a monoid that is contained in a group. Then $aM^{\times} = M^{\times}a$, for all $a \in \mathsf{N} (M)$.
\end{lemma}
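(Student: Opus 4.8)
The plan is to prove both inclusions $aM^{\times} \subseteq M^{\times}a$ and $M^{\times}a \subseteq aM^{\times}$, exploiting the ambient group $G \supseteq M$ to take inverses freely, while using the defining relation $aM = Ma$ (valid since $a \in \mathsf{N}(M)$) to shuttle elements from one side of $a$ to the other.

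For the first inclusion, I would fix $u \in M^{\times}$ and seek $v \in M^{\times}$ with $au = va$. Since $au \in aM = Ma$, there is some $v \in M$ with $au = va$; the only issue is whether $v$ is a unit of $M$. To settle this, I would apply the same reasoning to $u^{-1} \in M^{\times}$: because $au^{-1} \in aM = Ma$, there is $w \in M$ with $au^{-1} = wa$. Now I would pass to $G$, where $au = va$ and $au^{-1} = wa$ give $v = aua^{-1}$ and $w = au^{-1}a^{-1}$; multiplying these yields $vw = (aua^{-1})(au^{-1}a^{-1}) = 1$ and likewise $wv = 1$. Hence $w$ is a two-sided inverse of $v$ lying in $M$, so $v \in M^{\times}$ and $au = va \in M^{\times}a$.

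The reverse inclusion is entirely symmetric: for $u \in M^{\times}$ I would use $ua \in Ma = aM$ to write $ua = av$ with $v \in M$, and $u^{-1}a = aw$ with $w \in M$, then compute in $G$ that $vw = wv = 1$, so $v \in M^{\times}$ and $ua \in aM^{\times}$.

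The main obstacle---and the only real content---is upgrading the membership $v \in M$ (immediate from normality) to $v \in M^{\times}$. This is exactly where containment in a group is indispensable: it both supplies the group-inverse $a^{-1}$ needed to express $v$ in conjugate form, and lets us recognize the candidate inverse $w \in M$ via cancellation in $G$. Without the group hypothesis, $v$ need not be invertible in $M$, so the equality can fail.
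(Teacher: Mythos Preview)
Your proof is correct and follows essentially the same approach as the paper: both arguments use normality to write $au = va$ with $v \in M$, then apply the same to $u^{-1}$ to exhibit a two-sided inverse of $v$ in $M$, concluding $v = aua^{-1} \in M^{\times}$, with the reverse inclusion by symmetry. The paper's version is simply a bit more terse, computing $aua^{-1}$ directly rather than naming the intermediate elements $v$ and $w$.
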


\begin{proof}
Let $a \in \mathsf{N} (M)$ and $u \in M^{\times}$. Then $au \in aM = Ma$, and so $au a^{-1} \in M$, since $M$ is contained in a group. Likewise $au^{-1}a^{-1} \in M$, and hence $au a^{-1} \in M^{\times}$. Thus $au = (aua^{-1})a \in M^{\times}a$, and therefore $aM^{\times} \subseteq M^{\times} a$. By symmetry, $aM^{\times} = M^{\times}a$.
\end{proof}

We now turn to normalizing monoids with presentations.

\begin{lemma} \label{duo-lem}
Let $M := \langle X \mid R \rangle$, where $X$ is irredundant, and suppose that $M$ is normalizing, cancellative, and BF. 
\begin{enumerate}[\rm (1)]
\item For all $x,y \in X$, there exists $z \in X$ such that $xy=_M zx$.

\item Suppose that $X$ is finite but nonempty, and write $X = \{x_{1}, \dots, x_{n}\}$, for some $n \in \N^+$. Then for each $a \in \langle X \rangle$, there is a $($necessarily unique$)$ $\nu(a) \in \langle X \rangle$ such that $a =_M \nu(a)$, $|a| = |\nu(a)|$, $\nu(a) = x_{1}^{m_1} \cdots x_{n}^{m_n}$, each $m_i \in \N$, and $(m_1, \dots, m_n)$ is minimal in the lexicographic ordering on $\, \N^n$ among $n$-tuples of that form.
\end{enumerate}
\end{lemma}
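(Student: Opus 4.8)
The plan is to prove the two parts in sequence, using the hypotheses (normalizing, cancellative, BF, irredundant) as follows.

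For part (1), the key is that normalizing cancellative monoids let us move generators past each other. The plan is: given $x, y \in X$, since $M$ is normalizing, $xy \in xM = Mx$, so there exists $w \in M$ with $xy =_M wx$. By cancellativity, $|y| = |w|$ as lengths in $M$ once we pick a representative — but more carefully, I would use that $M$ is BF (hence, by Lemma~\ref{unit-canc-lem}, reduced and acyclic) together with the length considerations. The element $w$ can be written as a word in $\langle X \rangle$; I claim it must have length $1$, i.e. $w =_M z$ for some single generator $z \in X$. To see this, note $xy =_M wx$ gives $|xy| = 2 \in \mathsf{L}_M(wx)$, and one uses cancellativity to argue $w$ itself factors with a length matching that of $y$ (length $1$). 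The subtle point is passing from ``$w \in M$'' to ``$w$ is represented by a single element of $X$''; here irredundancy of $X$ should ensure that any length-one element of $M$ is $=_M$ to a generator, and cancellativity pins down the length. I would write $w$ as a word $x_{i_1} \cdots x_{i_r}$, observe that $x \cdot x_{i_1} \cdots x_{i_r} =_M x_1 \cdots$ (the word $xy$ of length $2$), and use BF/cancellativity to force $r = 1$.

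For part (2), the plan is a descent/normal-form argument. Fix the finite ordered generating set $X = \{x_1, \dots, x_n\}$ and, for a given $a \in \langle X \rangle$, consider the set of all words $b \in \mathsf{Z}_M(a)$ with $|b| = |a|$ that are of the sorted form $x_1^{m_1} \cdots x_n^{m_n}$. The first task is existence: starting from $a$, repeatedly apply part (1) to swap adjacent generators that are ``out of order,'' each swap preserving both the $M$-class and the length. I would argue that whenever a word is not already sorted, there are adjacent letters $x_j x_i$ with $j > i$, and part (1) (applied in the appropriate direction — possibly needing a symmetric/left-handed version, since part (1) moves $y$ to the left past $x$) lets us rewrite $x_j x_i =_M x_i' x_j$ or similar, strictly decreasing some monovariant. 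The natural monovariant is the number of inversions, or the lexicographic value of the exponent-position sequence; I must check that each swap strictly decreases it so the process terminates, yielding a sorted word of the same length. Then, among all such sorted representatives, I take the lexicographically minimal exponent tuple $(m_1, \dots, m_n)$, which exists since $\N^n$ is well-ordered lexicographically.

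The uniqueness claim (the parenthetical ``necessarily unique'') is immediate once existence and minimality are established: two sorted words with the same length that are both lexicographically minimal must have identical exponent tuples, hence be equal as words. The main obstacle I anticipate is part (1) — specifically, rigorously showing that the element $w$ with $xy =_M wx$ is $=_M$ to a single generator $z \in X$, rather than merely some word of length $1$ in $M$. This requires combining cancellativity (to control lengths), the BF hypothesis (to rule out pathologies via Lemma~\ref{unit-canc-lem}), and irredundancy (to identify length-one elements of $M$ with generators up to $=_M$). A secondary subtlety in part (2) is ensuring the rewriting process from part (1) can be oriented to sort the word and that it genuinely terminates; I would make the monovariant (inversion count) explicit and verify each application of part (1) reduces it by exactly one.
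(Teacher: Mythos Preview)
Your plan for part~(1) has a real gap. From $xy =_M wx$ with $w \in \langle X\rangle$, neither cancellativity nor BF lets you conclude that $w$ has a length-one factorization: cancellativity says nothing about word lengths, and BF only tells you $\mathsf{L}_M(xy)$ is finite, which is perfectly compatible with it containing both $2$ and $|w|+1$. The inclusion $\mathsf{L}_M(w)+\mathsf{L}_M(x)\subseteq \mathsf{L}_M(wx)$ goes the wrong way for your purposes. The paper's argument supplies the missing idea: write $w = bz$ with $z\in X$ the last letter, and apply the normalizing hypothesis \emph{twice more} to obtain $zx =_M xc$ and $bx =_M xd$. Then $xy =_M bzx =_M bxc =_M xdc$, and left cancellation gives $y =_M dc$. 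If $b\neq 1$ one checks $|dc|\geq 2$; Lemma~\ref{unit-canc-lem} then forces $dc$ to contain no occurrence of $y$, so $y$ lies in the submonoid generated by $X\setminus\{y\}$, contradicting irredundancy. The repeated use of normalizing to move everything past $x$ and reduce to a statement about $y$ alone is exactly what your outline is missing.

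For part~(2), your bubble-sort picture is in the right spirit, but the proposed monovariant ``inversion count'' does not work: part~(1) gives $x_j x_i =_M z x_j$ where $z$ need not equal $x_i$, so a single swap can change the multiset of letters and create new inversions to the left. The paper avoids this by a different induction. Among all words in $\mathsf{Z}_M(a)$ of length $|a|$ (finitely many, since $X$ is finite), choose one in which the maximal index $j$ of any occurring letter is as large as possible, and among those, one with the maximal count $m_j$ of $x_j$'s. Each application of (1) pushing an $x_j$ rightward yields a word still of length $|a|$ in $\mathsf{Z}_M(a)$, so by maximality the new letter $z$ must have index strictly less than $j$; hence after finitely many moves all the $x_j$'s sit at the right end, and one applies the inductive hypothesis to the shorter prefix. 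The maximality choice is what replaces your termination argument.
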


\begin{proof}
(1) Suppose that $x,y \in X$. Since $M$ is normalizing, $xy =_M ax$ for some $a \in \langle X \rangle$. It cannot be the case that $a =_M 1$, since then, using cancellativity, we would have $y =_M 1$, contrary to the hypothesis that $X$ is irredundant (and also the hypothesis that $M$ is BF). Hence $a \neq 1$ as an element of $\langle X \rangle$, and so we can write $a = bz$ for some $b \in \langle X \rangle$ and $z \in X$. 

Seeking a contradiction, suppose that $b \neq_M 1$. Since $M$ is normalizing, $zx =_M xc$ for some $c \in \langle X \rangle$, and so $xy =_M bzx =_M bxc$. As before, $M$ being cancellative and $X$ being irredundant imply that $c \neq 1$. Using the fact that $M$ is normalizing once more, $bx =_M xd$ for some $d \in \langle X \rangle$, where, necessarily, $d \neq 1$, since $b \neq_M 1$. Thus $xy=_M xdc$, which gives $y =_M dc$, by cancellativity. Since $c \neq 1$ and $d \neq 1$, we have $|dc| \geq 2$. Hence, by Lemma~\ref{unit-canc-lem}, $dc$ does not contain any instances of $y$, as a word. But then $y =_M dc$ contradicts $X$ being irredundant. Hence $b =_M 1$, and so $xy =_M zx$.

\smallskip

(2) We proceed by induction on word length. If $a \in \langle X \rangle$ is such that $|a| = 0$, then $a = 1$, and $\nu(1) = x_{1}^{0} \cdots x_{n}^{0}$ clearly satisfies the properties in the statement. Let us now assume inductively that there exists $k \in \N$ such that the statement holds for all $a \in \langle X \rangle$ satisfying $|a| \leq k$.

Let $a \in \langle X \rangle$ be such that $|a| = k+1$, and let 
\[
S := \{b \in \mathsf{Z}_M(a) \mid k+1 = |b|\}.
\]
Since $X$ is finite, so is $S$. We may therefore choose $b \in S$ such that $b$ contains $x_j$ with $j \in \{1, \dots, n\}$ maximal among the elements of $S$, and such that the number $m_j \in \N^+$ of instances of $x_j$ is maximal among those elements. Using (1) repeatedly, we can find $c \in \langle X \rangle$ such that $a =_M b =_M cx_j^{m_j}$ and $|c| + m_j = |a| = k+1$. Then $|c| \leq k$, and so using the inductive hypothesis, in light of the choice of $b$, we have $\nu(c) = x_{1}^{m_1} \cdots x_{j-1}^{m_{j-1}}x_{j}^{0} \cdots x_{n}^{0}$, where $c =_M \nu(c)$, $|c| = |\nu(c)|$, each $m_i \in \N$, and $(m_1, \dots, m_{j-1}, 0, \dots, 0)$ is minimal in the lexicographic ordering on $\N^n$ among $n$-tuples of that form. Then it follows easily, using the cancellativity of $M$, that 
\[
\nu(a) :=  x_{1}^{m_1} \cdots x_{j-1}^{m_{j-1}}x_j^{m_j}x_{j+1}^{0} \cdots x_{n}^{0}
\] 
has the desired properties.
\end{proof}

Our next result is an analogue of the well-known fact that every atomic finitely-generated commutative cancellative monoid has accepted elasticity (in terms of atoms)--see, e.g.~\cite[Remark 3.11(5)]{FGKT}. Note that unlike the atomic commutative setting, we must impose the assumption that our monoid is BF, to exclude, for example, abelian groups--see the discussion in Section~\ref{gen-sect} for more details.

\begin{proposition} \label{duo-prop}
Let $M := \langle X \mid R \rangle$, where $X$ is finite, and suppose that $M$ is normalizing, cancellative, and BF. Then $M$ has accepted elasticity.
\end{proposition}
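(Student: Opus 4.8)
The goal is to produce a single element $c \in \langle X \rangle$ whose length set achieves the supremal ratio $\rho(M)$, so that $\rho(M) = \rho(\mathsf{L}_M(c)) < \infty$. First I would record that, since $M$ is BF, Corollary~\ref{fin-delta} does not immediately apply, but the finiteness of $X$ together with Proposition~\ref{fin-elasticity-prop} tells me each $\mathsf{L}_M(a)$ is finite (this follows because BF already guarantees $\mathsf{L}_M(a)$ finite). The key structural tool is Lemma~\ref{duo-lem}(2): writing $X = \{x_1, \dots, x_n\}$, every $a \in \langle X \rangle$ has a canonical normal form $\nu(a) = x_1^{m_1} \cdots x_n^{m_n}$ of the \emph{same length} as $a$. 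The crucial consequence is that the normalizing, cancellative structure lets me commute generators past one another without changing length, so that for the purpose of computing length sets, $M$ behaves like a (finitely-generated) commutative monoid on the images of $x_1, \dots, x_n$.

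\textbf{Reducing to the commutative case.}
The heart of the argument is to transfer to a commutative monoid where accepted elasticity is classical. Let $\overline{M}$ be the commutative monoid obtained by adding all commutator relations $x_i x_j =_{} x_j x_i$ to the presentation (equivalently, the abelianization of $M$). Using Lemma~\ref{duo-lem}(2), the map sending each $a$ to the exponent vector $(m_1, \dots, m_n)$ of $\nu(a)$ is well-defined on $M$ and length-preserving, and I would argue it induces a length-set-preserving correspondence $\mathsf{L}_M(a) = \mathsf{L}_{\overline M}(\bar a)$. Concretely, any factorization $b =_M a$ of length $\ell$ yields, via $\nu$, a length-$\ell$ commutative factorization, and conversely the canonical forms realize every commutative length back in $M$; the subtle point is that introducing commutativity can only add relations, so I must check it does not create \emph{new} lengths, which is exactly where Lemma~\ref{duo-lem}(1) (replacing $xy$ by $zx$ with $|xy| = |zx|$) guarantees that commuting is already an $M$-consequence up to a length-preserving rewriting. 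Thus $\overline{M}$ is a finitely-generated commutative cancellative BF monoid with the same system of length sets, hence the same elasticity and the same accepted-elasticity status as $M$.

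\textbf{Invoking the commutative result and the main obstacle.}
Once the reduction is in place, $\overline{M}$ is a finitely-generated commutative cancellative monoid that is atomic (being BF), so by the classical result cited before the proposition (e.g.~\cite[Remark 3.11(5)]{FGKT}) it has accepted elasticity in the atom-based sense; combined with Proposition~\ref{atomic-prop} identifying atoms with generators in the reduced setting (and $M$, hence $\overline M$, is reduced by Lemma~\ref{unit-canc-lem}), this yields accepted elasticity in our generator-based sense, and transferring back gives $\rho(M) = \rho(\mathsf{L}_M(c))$ for a suitable $c$. The main obstacle I expect is the transfer step itself: I must verify rigorously that passing to $\overline M$ neither enlarges nor shrinks any length set. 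The danger is that the commutative quotient could identify two elements of $M$ that were distinct, potentially \emph{merging} length sets or introducing shorter factorizations not present in $M$. The safeguard is that all of Lemma~\ref{duo-lem}'s rewriting is length-preserving and already valid in $M$, so no genuinely new (shorter or longer) factorization is manufactured by commutativity; making this precise---ideally by exhibiting a length-preserving bijection between $\mathsf{Z}_M(a)$-classes and $\mathsf{Z}_{\overline M}(\bar a)$-classes via the normal form $\nu$---is the technical crux of the proof.
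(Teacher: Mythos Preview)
Your reduction to the abelianization has a genuine gap. Lemma~\ref{duo-lem}(1) asserts that for $x,y \in X$ there exists $z \in X$ with $xy =_M zx$, but \emph{not} that $xy =_M yx$: the element $z$ may well differ from $y$. So the length-preserving rewriting available in $M$ is a shuffle that can \emph{change which generators occur}, not a commutation, and the relations $x_i x_j = x_j x_i$ you impose in $\overline{M}$ are genuinely new. Concretely, if $xy =_M zx$ with $y \neq_M z$, then in $\overline{M}$ one has $\bar x \bar y = \bar z \bar x = \bar x \bar z$; so either $\overline{M}$ is not cancellative (and the classical commutative result does not apply), or $\bar y = \bar z$ and distinct elements of $M$ have been merged. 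In the latter case the equality $\mathsf{L}_M(a) = \mathsf{L}_{\overline{M}}(\bar a)$ you need can fail: whenever $a \neq_M b$ but $\bar a = \bar b$, the set $\mathsf{L}_{\overline M}(\bar a)$ contains $\mathsf{L}_M(a) \cup \mathsf{L}_M(b)$, possibly with a strictly larger max/min ratio than either. There is no evident mechanism forcing $\rho(\overline{M}) = \rho(M)$, nor one pulling an accepted-elasticity witness back from $\overline{M}$ to $M$.

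The paper avoids any transfer and works inside $M$. It encodes each pair $(\nu(a),\nu(b))$ with $a =_M b$ as a point of $\N^{2n}$ via exponent vectors, applies Dickson's Theorem to extract the finitely many $\preceq$-minimal such pairs, and picks among those a pair maximizing the ratio $|\nu(a)|/|\nu(b)|$. An induction on $|\nu(c)|+|\nu(d)|$---using Lemma~\ref{duo-lem}(1) and cancellativity to split off a minimal pair from any non-minimal one---then shows this ratio dominates every $|\nu(c)|/|\nu(d)|$, giving $\rho(M) = |\nu(a)|/|\nu(b)|$ directly.
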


\begin{proof}
Since $X$ is finite, upon replacing it with a suitable subset, we may assume that $X$ is irredundant. If $X = \emptyset$, then $\rho(M) = \rho(\mathsf{L}_M(1)) = 0$. So we may assume that $X$ is nonempty, and write $X = \{x_{1}, \dots, x_{n}\}$, for some $n \in \N^+$. 

For each $a \in \langle X \rangle$, let $\nu(a) = x_{1}^{k_1} \cdots x_{n}^{k_n} \in \langle X \rangle$ denote the unique representation of $a$ given in Lemma~\ref{duo-lem}(2), and let
\[
S := \{(\nu(a),\nu(b)) \in \langle X \rangle \times \langle X \rangle \mid  a =_M b\} \setminus \{(x_{1}^{0} \cdots x_{n}^{0},x_{1}^{0} \cdots x_{n}^{0})\}.
\]
Define a function $f:S \to \N^{2n}$ via 
\[
f((x_{1}^{k_1} \cdots x_{n}^{k_n},x_{1}^{m_1} \cdots x_{n}^{m_n})) := (k_1, \dots, k_n,m_1, \dots, m_n).
\]
By Dickson’s Theorem~\cite[Theorem 1.5.3]{GHK}, there are only finitely many minimal points in the image $f(S)$ of $f$, relative to the usual product order on $\N^{2n}$ (i.e., $(p_{1},p_{2},\dots, p_{2n})\preceq (r_{1},r_{2},\dots, r_{2n})$ if and only if $p_{i}\leq r_{i}$ for each $i$, in the standard ordering $\leq$ on $\N$). Let $T \subseteq S$ be the inverse image under $f$ of the set of minimal points in $f(S)$. Since $f$ is clearly injective, $T$ is finite, and so we can choose $(\nu(a),\nu(b)) \in T$ such that $|\nu(a)|/|\nu(b)|$ is maximal. (Note that, by Lemma~\ref{unit-canc-lem}, if $c \in \langle X \rangle$ is such that $c=_M 1$, then $c = 1$. So $|\nu(a)| \neq 0 \neq |\nu(b)|$, for all $(\nu(a),\nu(b)) \in S$.) We claim that
\begin{equation} \tag{\dag} \label{eq1}
\frac{|\nu(c)|}{|\nu(d)|} \leq \frac{|\nu(a)|}{|\nu(b)|},
\end{equation}
for all $(\nu(c),\nu(d)) \in S$.

To prove the claim we proceed by induction on $|\nu(c)|+|\nu(d)|$, for $(\nu(c),\nu(d)) \in S$. There are no $(\nu(c),\nu(d)) \in S$ satisfying $|\nu(c)|+|\nu(d)| = 1$, and so~\eqref{eq1} holds vacuously in that case. Suppose inductively that there exists $r \in \N^+$ such that~\eqref{eq1} holds for all $(\nu(c),\nu(d)) \in S$ with $|\nu(c)|+|\nu(d)| \leq r$. Let $(\nu(c),\nu(d)) \in S$ be such that 
\[
|\nu(c)|+|\nu(d)| = r+1.
\] 
If $(\nu(c),\nu(d)) \in T$, then~\eqref{eq1} is satisfied, by the choice of $(\nu(a),\nu(b))$, and so we may assume that $(\nu(c),\nu(d)) \notin T$. Write $\nu(c) = x_{1}^{k_1} \cdots x_{n}^{k_n}$ and $\nu(d) = x_{1}^{m_1} \cdots x_{n}^{m_n}$, for appropriate $k_i, m_i \in \N$. Then there exists $(\nu(e),\nu(f)) \in T$, where $\nu(e) = x_{1}^{p_1} \cdots x_{n}^{p_n}$ and $\nu(f) = x_{1}^{q_1} \cdots x_{n}^{q_n}$, for some $p_i, q_i \in \N$ satisfying $p_i \leq k_i$, $q_i \leq m_i$ for each $i \in \{1, \dots, n\}$, with at least one of those inequalities strict. Thus
\[
|\nu(e)|+|\nu(f)| < |\nu(c)|+|\nu(d)|.
\]
Applying Lemma~\ref{duo-lem}(1) repeatedly, we can write $\nu(c) =_M g\nu(e)$ and $\nu(d) =_M h\nu(f)$, for some $g,h \in \langle X \rangle$ such that $|\nu(c)| = |g| + |\nu(e)|$, $|\nu(d)| = |h| + |\nu(f)|$, and at least one of $g$ and $h$ is different from $1$. As an element of $S$, $(\nu(e),\nu(f)) \neq (1,1)$, and so
\[
|\nu(g)|+|\nu(h)| = |g|+|h| < |\nu(c)|+|\nu(d)| = r+1.
\]
Since 
\[
\nu(g)\nu(e) =_M g\nu(e) =_M \nu(c) =_M \nu(d) =_M h\nu(f) =_M \nu(h)\nu(f),
\]
$\nu(e) =_M \nu(f)$, and $M$ is cancellative, we conclude that $\nu(g) =_M \nu(h)$. Since at least one of $g$ and $h$ is different from $1$, we have $(\nu(g),\nu(h)) \in S$. Hence
\[
\frac{|\nu(g)|}{|\nu(h)|} \leq \frac{|\nu(a)|}{|\nu(b)|},
\]
by the inductive hypothesis. Therefore
\[
\frac{|\nu(c)|}{|\nu(d)|} = \frac{|\nu(g)| + |\nu(e)|}{|\nu(h)| + |\nu(f)|} \leq \frac{|\nu(a)|}{|\nu(b)|},
\]
as claimed.

Finally, for all $c,d \in \langle X \rangle \setminus \{1\}$ satisfying $c=_M d$, we have
\[
\frac{|c|}{|d|} = \frac{|\nu(c)|}{|\nu(d)|} \leq \frac{|\nu(a)|}{|\nu(b)|} = \frac{|a|}{|b|} .
\]
It follows that
\[
\rho(\mathsf{L}_M(a)) = \frac{\max (\mathsf{L}_M(a))}{\min (\mathsf{L}_M(a))} = \frac{|a|}{|b|},
\]
and $\rho(\mathsf{L}_M(c)) \leq \rho(\mathsf{L}_M(a))$ for all $c \in \langle X \rangle$. Hence $\rho(\mathsf{L}_M(a)) = \rho(M)$, i.e., $M$ has accepted elasticity.
\end{proof}

We are now ready for our main result.

\begin{theorem} \label{duo-thrm}
Let $M := \langle X \mid R \rangle$, where $X$ is finite but nonempty, and $\, \{|a|-|b| \mid (a,b)\in R\}$ is finite. If $M$ is normalizing and cancellative, then it satisfies the Structure Theorem for Unions. If $M$ is additionally BF, then it satisfies the Strong Structure Theorem for Unions.
\end{theorem}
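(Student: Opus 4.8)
The plan is to assemble the theorem from the machinery already in place, via a dichotomy on whether $M$ is BF. The common ingredient for both branches is the finiteness of $\Delta(M)$: since $\{|a|-|b| \mid (a,b)\in R\}$ is finite by hypothesis, Corollary~\ref{fin-delta} gives that $\Delta(M)$ is finite. I would also record at the outset that, because $X$ is finite, the set $Y$ appearing in Proposition~\ref{fin-elasticity-prop} (being a subset of $X$) is finite, so that proposition is applicable to $M$. Note too that $X \neq \emptyset$, so the hypotheses of Proposition~\ref{tringali-prop} are met throughout.

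For the first assertion (the plain Structure Theorem for Unions, assuming only that $M$ is normalizing and cancellative), I would split into two cases. If $M$ is \emph{not} BF, then by definition some length set $\mathsf{L}_M(a)$ is infinite, which is condition~(2) of Proposition~\ref{fin-elasticity-prop}; the equivalence stated there yields a $k \in \N$ with $\rho_k(M) = \infty$. Combined with the finiteness of $\Delta(M)$, Proposition~\ref{tringali-prop}(1) delivers the Structure Theorem for Unions. If instead $M$ \emph{is} BF, then the hypotheses of Proposition~\ref{duo-prop} are all satisfied ($X$ finite, normalizing, cancellative, BF), so $M$ has accepted elasticity; Proposition~\ref{tringali-prop}(3) then gives the Strong Structure Theorem for Unions, which in particular implies the plain one. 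Either way the first assertion holds.

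For the second assertion, the extra BF hypothesis places us squarely in the second case above: Proposition~\ref{duo-prop} supplies accepted elasticity, and Proposition~\ref{tringali-prop}(3) upgrades this directly to the Strong Structure Theorem for Unions.

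The genuine mathematical labor has already been discharged in the supporting results, above all in Proposition~\ref{duo-prop}, whose proof invokes Lemma~\ref{duo-lem} and Dickson's Theorem to extract accepted elasticity. At the level of the theorem itself, the only delicate point I anticipate is confirming that the case split is exhaustive and that each branch's hypotheses are genuinely met; in particular, it is worth flagging that the non-BF branch does not use normalizing or cancellative at all, relying only on the finiteness of the relation-length differences (through Corollary~\ref{fin-delta}) and of $X$ (through Proposition~\ref{fin-elasticity-prop}). Once the pieces are lined up I do not expect any serious obstacle.
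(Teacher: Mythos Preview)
Your proposal is correct and follows essentially the same route as the paper: establish finiteness of $\Delta(M)$ via Corollary~\ref{fin-delta}, then split on BF, invoking Proposition~\ref{tringali-prop}(1) in the non-BF case and Proposition~\ref{duo-prop} followed by Proposition~\ref{tringali-prop}(3) in the BF case. The only cosmetic difference is that the paper passes directly from ``not BF'' to ``$\rho_n(M)=\infty$ for some $n\in\N^+$'' without routing through Proposition~\ref{fin-elasticity-prop}, since that implication is immediate (an infinite $\mathsf{L}_M(a)$ sits inside $\mathcal{U}_{|a|}(M)$); your detour is harmless but unnecessary.
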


\begin{proof}
Since $\{|a|-|b| \mid (a,b)\in R\}$ is finite, so is $\Delta (M)$, by Corollary~\ref{fin-delta}.

If $M$ is not BF, then $\rho_n(M) = \infty$ for some $n \in \N^+$, and hence $M$ satisfies the Structure Theorem for Unions, by Proposition~\ref{tringali-prop}(1). So we may assume that $M$ is normalizing, cancellative, and BF. Then $M$ has accepted elasticity, by Proposition~\ref{duo-prop}, and therefore satisfies the strong Structure Theorem for Unions, by Proposition~\ref{tringali-prop}(3).
\end{proof}

Next we show that the monoids in Proposition~\ref{duo-prop} and Theorem~\ref{duo-thrm} are atomic.

\begin{proposition} \label{norm-atomic-prop}
Let $M$ be a normalizing unit-cancellative monoid. 
\begin{enumerate}[\rm (1)]
\item $M$ is acyclic.

\item If $M$ is finitely-generated, then it is atomic.

\item If $M$ is finitely-generated and reduced, then $\mathcal{A}(M)$ is the unique $($necessarily finite$)$ irredundant generating set for $M$.
\end{enumerate}
\end{proposition}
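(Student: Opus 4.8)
The plan is to prove the three parts in order, using the earlier structural results and building the later parts on the earlier ones.

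\textbf{Part (1).} I would show directly that a normalizing unit-cancellative monoid is acyclic. Suppose $a = bac$ for some $a,b,c \in M$; the goal is to force $b,c \in M^{\times}$. The key leverage is normalizing-ness: since $a \in M = \mathsf{N}(M)$, we have $aM = Ma$, so I can move factors across $a$. Specifically, from $a = bac$ I would use $ac \in aM = Ma$ to rewrite $bac$ as $bMa$-type expression, aiming to produce a relation of the form $a = (\text{something})\,a$ or $a = a\,(\text{something})$ to which unit-cancellativity (the condition that $a = ad$ or $a = da$ forces $d \in M^{\times}$) applies. The cleanest route is probably: write $ac =_M c'a$ for some $c' \in M$ (using $aM = Ma$), so that $a = bac = bc'a$, whence unit-cancellativity gives $bc' \in M^{\times}$, and therefore both $b$ and $c'$ are units; then a symmetric argument on the other side handles $c$. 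The main care needed is keeping track of which one-sided cancellation applies after each normalizing rewrite.

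\textbf{Part (2).} I would invoke a standard finiteness-of-descent argument for finitely-generated monoids. Since $M$ is finitely-generated, fix a finite generating set; the free length $|a|$ of a representative word gives a well-founded measure. I claim every non-unit is a product of atoms: if $a \notin M^{\times}$ is not itself an atom, factor $a = bc$ with $b,c \notin M^{\times}$, and argue that each proper factor has strictly smaller ``length.'' The obstacle here, and the reason acyclicity from Part (1) is needed, is ruling out infinite descent: acyclicity (together with unit-cancellativity) prevents a factor from being ``as long as'' the original in a way that would stall the induction. I would lean on acyclicity to guarantee that in any factorization $a = bc$ into non-units the factors are genuinely smaller, so that induction on length terminates and every element decomposes into atoms.

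\textbf{Part (3).} Assuming additionally that $M$ is reduced, I would combine Part (2) with \cref{atomic-prop}. Since $M$ is finitely-generated, it has a finite generating set, and by discarding redundant generators I obtain a finite \emph{irredundant} generating set $X$, so $M = \langle X \mid R \rangle$ for suitable $R$. By Part (2), $M$ is atomic, and since $M$ is reduced with $X$ irredundant, \cref{atomic-prop}(3) gives $\mathcal{A}(M) = X$. This simultaneously shows $\mathcal{A}(M)$ is a finite irredundant generating set. For \emph{uniqueness}, I would argue that any irredundant generating set must coincide with $\mathcal{A}(M)$: by \cref{atomic-prop}(1), in a reduced monoid $\mathcal{A}(M) \subseteq X'$ for every generating set $X'$, and each atom, being a non-unit that cannot be shortened, cannot be omitted from any generating set; conversely any non-atom generator would be redundant by atomicity and reducedness, contradicting irredundancy. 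Hence every irredundant generating set equals $\mathcal{A}(M)$.

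\textbf{Main obstacle.} I expect the genuine difficulty to lie in Part (1): executing the normalizing rewrites so that the resulting one-sided relation is exactly of the shape $a = da$ or $a = ad$ required to apply unit-cancellativity, and confirming that $a$ need not be invertible itself for the argument to go through. Parts (2) and (3) are then relatively mechanical consequences of acyclicity, \cref{atomic-prop}, and the standard length-descent principle for finitely-generated monoids.
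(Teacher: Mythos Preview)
Your Parts (1) and (3) are fine and essentially match the paper's approach. The real issue is Part (2).

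Your proposed length-descent argument has a genuine gap at exactly the point you flag as ``the obstacle'': you assert that acyclicity guarantees that in a factorization $a = bc$ into non-units the factors are ``genuinely smaller,'' but you never say smaller \emph{with respect to what}, and minimal word length over a finite generating set does not obviously decrease under passage to a proper factor. All one gets for free is $\ell(a) \le \ell(b)+\ell(c)$; there is no evident reason why, say, $\ell(b) < \ell(a)$. What acyclicity (via unit-cancellativity and normalizing) \emph{does} give you is that $b \notin aM$ and $b \notin Ma$ when $c$ is a non-unit, hence $MbM \supsetneq MaM$ strictly. So an infinite descent produces a strictly ascending chain of principal (two-sided) ideals, and the argument terminates if and only if $M$ satisfies the ACC on principal ideals. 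That ACCP statement for finitely-generated normalizing monoids is the genuine content, and it is not a triviality about word length.

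The paper does not attempt a self-contained length argument here; it invokes \cite[Theorem 5.8]{Co-Tr23a}, which establishes ACCP for finitely-generated normalizing monoids, and then \cite[Corollary 2.6]{Tr23a}, which characterises atomicity for acyclic monoids via stabilisation of ascending chains of principal ideals starting from generators. Your sketch is heading toward the same chain-condition mechanism, but as written it stops short of it: you would need either to prove ACCP directly (non-trivial) or to cite it, rather than appeal to an unproven ``factors are shorter'' principle.
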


\begin{proof}
(1) Suppose that $a,b,c \in M$ satisfy $a=bac$. Then $a=adc=bea$, for some $d,e\in M$, since $M$ is normalizing. Given that $M$ is unit-cancellative, we conclude that $dc,be \in M^{\times}$. Since $M$ is normalizing, it follows that $b,c \in M^{\times}$. Hence $M$ is acyclic.

\smallskip

(2) According to~\cite[Theorem 5.8]{Co-Tr23a}, every finitely-generated normalizing monoid satisfies the ascending chain condition on principal ideals. Moreover, according to~\cite[Corollary 2.6]{Tr23a}, an acyclic monoid $M$ is atomic if and only if $M$ has a generating set $X$ such that every ascending chain of principal ideals, starting with $MxM$ for some $x \in X$, stabilizes. Therefore the desired conclusion follows from (1).

\smallskip

(3) Suppose that $M$ is finitely-generated and reduced. Then it is atomic, by (2). Since $M$ is finitely-generated, we can find an irredundant generating set $X$ for $M$. Since $M$ is reduced, necessarily $X = \mathcal{A}(M)$, by Proposition~\ref{atomic-prop}(3). Since $X$ was an arbitrary irredundant generating set, and since $M$ is finitely-generated, $X=\mathcal{A}(M)$ must be finite. 
\end{proof}

It is necessary to assume that the monoid in Proposition~\ref{norm-atomic-prop} is normalizing, for the conclusions to hold. For example, $\langle x, y \mid (y, xyx) \rangle$ is clearly not acyclic, and it is not atomic, since $y$ is neither a unit nor a product of atoms. It is not hard to see, however, that this monoid is cancellative (e.g., using Proposition~\ref{adyan-prop} below).

It can be shown that every monoid which is atomic, finitely-generated, commutative, and unit-cancellative has finite elasticity \cite[Corollary 3.4]{CT} (in terms of atoms), but it need not be accepted \cite[Remark 3.11(5)]{FGKT}. The example constructed in \cite[Remark 3.11(5)]{FGKT} is reduced, and so, in view of Proposition~\ref{norm-atomic-prop}(3), we cannot replace ``cancellative" with ``unit-cancellative" in the hypotheses of Proposition~\ref{duo-prop}. We discuss in Section~\ref{eg-sect} the role of the normalizing assumption in Proposition~\ref{duo-prop} and Theorem~\ref{duo-thrm}.

\section{Examples} \label{eg-sect}

In this section we construct finitely-presented cancellative monoids with unruly factorization behaviors, which demonstrate the sharpness of our results in Section~\ref{duo-sect}. Showing that these monoids are cancellative can be accomplished most conveniently by using a classical result, which we record next.

Given a monoid $\langle X \mid R \rangle$, where $(a,1), (1,a) \notin R$ for all $a \in \langle X \rangle$, denote by $G_l(X \mid R)$, respectively $G_r(X \mid R)$, the undirected graph having vertex set $X$ and edge set $R$, such that for each $(a,b) \in R$ there is an edge between the initial letters of $a$ and $b$, respectively between the final letters of $a$ and $b$. A monoid $M$ is called \emph{Adyan} if $M = \langle X \mid R \rangle$, where $X$ and $R$ are finite, $(a,1), (1,a) \notin R$ for all $a \in \langle X \rangle$, and $G_l(X \mid R)$ and $G_r(X \mid R)$ are acyclic. 

\begin{proposition}[Adyan~\cite{A}] \label{adyan-prop}
Every Adyan monoid embeds in a group, and is therefore cancellative.
\end{proposition}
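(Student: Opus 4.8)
The plan is to realize the monoid $M$ inside its group completion. Since every subsemigroup of a group is cancellative, the second assertion follows immediately from the first, so the entire content lies in the embedding. Let $G$ denote the group with presentation $\langle X \mid R\rangle$ — the same generators and defining relations, now read in the free group on $X$ — and let $\iota \colon M \to G$ be the canonical monoid homomorphism induced by the identity map on $X$. The goal is to show that $\iota$ is injective, which is equivalent to the assertion that whenever $u,v \in \langle X \rangle$ satisfy $u =_G v$, they already satisfy $u =_M v$. In other words, every equality of positive words that holds in the group must be witnessed by a derivation passing through positive words only, never through a formal inverse.

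To attack the injectivity I would induct on $|u| + |v|$ and use the two graph hypotheses to control the extreme letters of the words appearing in a derivation. The hypothesis that $(a,1),(1,a) \notin R$ for all $a \in \langle X \rangle$ guarantees that both sides of every defining relation are nonempty, so no generator is ever equated to $1$ and the empty word is isolated; this rules out the degenerate length collapses that would otherwise obstruct positive rewriting. The acyclicity of $G_l(X \mid R)$ governs the leftmost letter: applying a relation at the front of a word moves its first letter along an edge of $G_l(X \mid R)$, so throughout a derivation the first letter traces a walk in the forest $G_l(X \mid R)$, and the absence of cycles sharply limits how that letter can evolve and return to its starting point. This is the device by which any left-cancellation taking place in $G$ is shown to be realizable by a positive derivation in $M$, and the acyclicity of $G_r(X \mid R)$ plays the symmetric role for the rightmost letter on the right.

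The main obstacle is precisely this combinatorial core: converting a group-theoretic equality $u =_G v$ into a positive semigroup derivation $u =_M v$. The difficulty is that a van Kampen diagram (or a free-group reduction) witnessing $u =_G v$ may involve cancellation of inverse pairs that correspond to no positive rewriting in $M$; one must use the forest structure of $G_l(X \mid R)$ and $G_r(X \mid R)$ to show that such cancellations can always be reorganized, excluding the cyclic configurations of first and last letters that they would otherwise require. Making this rigorous is the substance of Adyan's argument. Accordingly, I would either carry out in full the first-letter/last-letter tracking induction sketched above, or invoke \cite{A} for the detailed diagram analysis; once $\iota$ is known to be injective, $M$ embeds in $G$, and cancellativity is then automatic.
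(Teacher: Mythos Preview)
The paper does not supply a proof of this proposition at all: it is stated as a classical result with attribution to Adyan~\cite{A}, followed only by a remark pointing the reader to \cite{BS}, \cite{R}, and \cite{D} for further discussion and related criteria. So there is nothing in the paper to compare your argument against beyond the bare citation.

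That said, your sketch is a faithful outline of the standard strategy behind Adyan's theorem: pass to the group with the same presentation, reduce to showing that group equality of positive words implies monoid equality, and use the forest structure of $G_l(X\mid R)$ and $G_r(X\mid R)$ to control the first and last letters along a derivation. You correctly isolate the real content (injectivity of $\iota$) and the real obstacle (eliminating inverse-pair cancellations from a group derivation). Your proposal is not a complete proof---the ``combinatorial core'' you flag is genuinely nontrivial, and you yourself defer to \cite{A} for it---but as a description of the approach it is accurate, and it already goes further than the paper, which simply quotes the result.
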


In particular, a monoid of the form $\langle X \mid (a,b) \rangle$ is cancellative, provided that $a,b \in \langle X \rangle \setminus \{1\}$, and $a$ and $b$ begin, as well as end, with different elements of $X$. We note that our description of Adyan's result is based on that given in~\cite{R}, which also contains a more general criterion, though in that publication it is stated for semigroups rather than monoids. See~\cite{D} for another cancellativity criterion for monoids with presentations.

Our first construction is of a finitely-presented cancellative BF monoid that does not have accepted elasticity (and is not normalizing), showing the necessity of assuming that the monoid in Proposition~\ref{duo-prop} is normalizing, for the conclusion of that result to hold. We begin with a technical lemma.

\begin{lemma} \label{eg1-lem}
Let $M := \langle x,y,z \mid (xy,yzx) \rangle$.
\begin{enumerate}[\rm (1)]
\item For all $k,l\in \N^+$, we have $\, \mathsf{L}_M(x^ky^l) = [k+l, 2(k+l)-1]$.

\item For all $k,l\in \N^+$, we have  $\rho(\mathsf{L}_M(x^ky^l)) = 2-\frac{1}{k+l}$.

\item For all $n\in \N$, among $a \in \langle x,y,z \rangle$ such that $n = |a|$, $\rho(\mathsf{L}_M(a))$ is maximal for any $a = x^ky^l$ with $n=k+l$.
\end{enumerate}
\end{lemma}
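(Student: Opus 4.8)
The plan is to analyze $M$ through the length-increasing rewriting rule $\sigma\colon xy \to yzx$, whose reflexive–symmetric–transitive closure is exactly $=_M$. The rule has no self-overlaps (its left-hand side $xy$ begins with $x$ and ends with $y$), so it is trivially locally confluent; moreover, writing $\iota(w)$ for the number of position-pairs of $w$ carrying an $x$ before a $y$, each application of $\sigma$ lowers $\iota$ by exactly $1$, so $\sigma$ terminates. By Newman's Lemma each $=_M$-class then has a unique $\sigma$-normal form, namely the unique $xy$-free word in the class; since every word reduces to it by length-increasing steps, this normal form is the unique longest word in the class. The reverse rule $yzx \to xy$ is length-decreasing and also overlap-free, so the same reasoning yields a unique shortest ($yzx$-free) word. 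Thus $\max(\mathsf{L}_M(a))$ and $\min(\mathsf{L}_M(a))$ are the lengths of these two canonical forms. I also record the invariant that every word equivalent to $a$ carries the same number of $x$'s and of $y$'s, since $\sigma$ preserves both counts and changes the $z$-count by $1$.

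For part (1), I would first verify the base identities $x^k y =_M y(zx)^k$ and $xy^l =_M (yz)^l x$ by routine inductions on $k$ and $l$, and then splice them to get $x^k y^l =_M y(zx)^{k-1} z (yz)^{l-1} x$. The right-hand side has no $xy$-factor (each $x$ is followed by $z$ or ends the word, each $y$ by $z$), so it is the $\sigma$-normal form, of length $2(k+l)-1$; hence $\max(\mathsf{L}_M(x^k y^l)) = 2(k+l)-1$. Since $x^k y^l$ is itself $yzx$-free, $\min(\mathsf{L}_M(x^k y^l)) = k+l$. As $d = ||xy|-|yzx|| = 1$, Proposition~\ref{one-relation-prop} gives that $\mathsf{L}_M(x^k y^l)$ is an arithmetic progression with difference $1$, i.e. the full interval $[k+l,\,2(k+l)-1]$. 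Part (2) is then immediate, since $\rho(\mathsf{L}_M(x^k y^l)) = (2(k+l)-1)/(k+l) = 2 - 1/(k+l)$.

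For part (3), since $\rho(\mathsf{L}_M(x^k y^l)) = 2 - 1/n$ when $k+l = n$, it suffices to prove $\rho(\mathsf{L}_M(a)) \le 2 - 1/n$ for every $a$ with $|a| = n$. I would derive this from the single inequality $\max(\mathsf{L}_M(a)) \le 2\min(\mathsf{L}_M(a)) - 1$ (for all $a \neq_M 1$): combined with the trivial $\min(\mathsf{L}_M(a)) \le |a| = n$ and the monotonicity of $t \mapsto 2 - 1/t$, it gives $\rho = \max/\min \le 2 - 1/\min \le 2 - 1/n$, with equality exactly at $a = x^k y^l$. To prove the key inequality I would pass from the shortest form $\check{w}$ to the longest form $\hat{w}$ by $\sigma$-steps and track the $x$- and $y$-letters as individual tokens: same-type tokens never exchange order, so each ordered pair $(x\text{-token}, y\text{-token})$ is swapped at most once, and the total number of $\sigma$-steps equals the number of such crossing pairs, which is $\max - \min$. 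Recording each crossing as an edge between the two tokens produces a bipartite graph on the $\#x + \#y$ letters; if this graph is a forest, the number of crossings is at most $\#x + \#y - 1 \le |\check{w}| - 1 = \min - 1$, whence $\max = \min + (\#\text{crossings}) \le 2\min - 1$.

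The main obstacle is exactly the acyclicity of this crossing graph; equivalently, that the crossed pairs never contain a full $2\times 2$ block $\{x_a,x_b\}\times\{y_c,y_d\}$. Because the relative orders of the $x$-tokens and of the $y$-tokens are fixed, the pairs that are $x$-before-$y$ in $\check{w}$, respectively in $\hat{w}$, form nested monotone staircase regions, and the crossings are precisely the cells lying between them; the substance of the argument is that the dynamics of $\sigma$—each inserted $z$ permanently separates the $y$ it follows from the $x$ it precedes—force this skew region to be $2\times 2$-block-free. I would establish this by induction along the reduction, showing that each $\sigma$-step joins two previously distinct components of the crossing graph: the freshly adjacent $x_i$ and $y_j$ cannot already be connected, since every token already crossed by $x_i$ now sits strictly to its left behind an intervening $z$. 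Verifying that this component-merging persists throughout the reduction is the delicate point on which the whole of part (3) rests.
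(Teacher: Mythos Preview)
Your treatment of (1) and (2) is correct and in fact tidier than the paper's: where the paper writes out an explicit chain of rewrites and argues by inspection that it exhausts all applications of the relation, you identify the two extremal lengths via confluence and termination of $\sigma$ and of its reverse, and then invoke Proposition~\ref{one-relation-prop} to fill in the full interval. That is a genuine simplification.

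For (3) your route is quite different from the paper's, and as written it is incomplete. The paper argues by local replacement: starting from a length-$n$ word $a$ with maximal $\rho$, it shows that any factor $yx$ may be traded for $xy$, and any occurrence of $z$ may be replaced by an extra $x$ or $y$, each time yielding a word of the same length admitting at least as many expansions; this forces $a=x^ky^l$. Your uniform inequality $\max(\mathsf L_M(a))\le 2\min(\mathsf L_M(a))-1$ is an elegant reformulation, but the proposal stops precisely at the hard step. The justification you offer for the forest claim (``each $\sigma$-step joins two distinct components, since every token already crossed by $x_i$ sits to its left behind a $z$'') only shows that the new edge $x_i\text{--}y_j$ was not already present; it does not exclude a path $x_i - y_k - x_m - \cdots - y_j$ through earlier crossings, so the component-merging assertion is unproven. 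The forest claim is nevertheless true, and the clean way to finish it is to exclude $2\times 2$ blocks directly: for $a<b$ and $c<d$, in any reduction the crossing $(x_b,y_c)$ must occur before each of $(x_a,y_c)$, $(x_b,y_d)$, $(x_a,y_d)$ (at the moment $x_ay_c$ or $x_by_d$ become adjacent, $x_b$ respectively $y_c$ can no longer lie between them); the $z$-token inserted at that first crossing then lies between $x_a$ and $y_d$, and since the position of any letter relative to a fixed $z$-token is invariant under all later applications of $\sigma$, $x_a$ and $y_d$ are never adjacent and $(x_a,y_d)$ cannot be a crossing. Together with the skew-shape structure you already recorded, ``no $2\times 2$ block'' makes the crossing set a disjoint union of ribbons, whose bipartite graph is a disjoint union of paths --- hence a forest --- and the bound follows.
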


\begin{proof}
(1) Let $k,l \in \N^+$. Since $x^ky^l$ contains no instances of $z$, no application of the defining relation to this word in can result in a shorter one. Now,
\[x^ky^l = x^{k-1}(xy)y^{l-1} =_M x^{k-1}(yzx)y^{l-1},\]
where the last term has length $k+l+1$ (which proves the claim for $k=1=l$, since no other expansions are possible in that case). Next, the sequence
\[x^{k-1}(yzx)y^{l-1} = x^{k-2}(xy)zxy^{l-1} =_M x^{k-2}(yzx)zxy^{l-1} = x^{k-2}y(zx)z(xy)y^{l-2}\]
\[=_M x^{k-2}y(zx)z(yzx)y^{l-2} = x^{k-2}y(zx)z(yz)xy^{l-2}\]
contains words of length $k+l+2$ and $k+l+3$ (provided that both $k \geq 2$ and $l \geq 2$), and this proves the claim for $k,l\leq 2$. Continuing in this fashion,
\[x^{k-2}y(zx)z(yz)xy^{l-2} =_M \cdots =_M x^{k-3}y(zx)^2z(yz)^2xy^{l-3} =_M \cdots =_M y(zx)^{k-1}z(yz)^{l-1}x,\]
and the sequence contains words of all lengths in $[k+l, 2(k+l)-1]$. Since $y(zx)^{k-1}z(yz)^{l-1}x$ contains no instances of $xy$, no application of the defining relation of $M$ to that word can result in a longer one. Moreover, the above sequence contains all the possible applications of the defining relation to a word obtained from $x^ky^l$, and so $\mathsf{L}_M(x^ky^l) = [k+l, 2(k+l)-1]$. 

\smallskip

(2) By (1), for all $k,l\in \N^+$, we have
\[\rho(\mathsf{L}_M(x^ky^l)) = \frac{\max (\mathsf{L}_M(x^ky^l))}{\min (\mathsf{L}_M(x^ky^l))} = \frac{2(k+l)-1}{k+l} = 2-\frac{1}{k+l}.\]

\smallskip

(3) Let $n\in \N$, let $a \in \langle x,y,z \rangle$ be such that $n = |a|$, and suppose that $\rho(\mathsf{L}_M(a))$ is maximal among elements of length $n$. If $n=0$, then $a=1=x^0y^0$, and $\rho(\mathsf{L}_M(a)) = 0$. If $n = 1$, then $a \in \{x,y,z\}$, and $\rho(\mathsf{L}_M(a)) = 1$, regardless of the choice of $a$. So we may assume that $n \geq 2$.

Next, suppose that $a = (uy)(xv)$ for some $u,v \in \langle x,y,z \rangle$, and let $b := uxyv$ (so $|a|=|b|$). Then
\[b =_M u(yzx)v =_M (uy)z(xv).\]
If either $u$ cannot be expanded to a word that ends in $x$, or $v$ cannot be expanded to a word that ends in $y$, then any applications of the defining relation to $a$, that results in a longer word, can also be applied to $(uy)z(xv)$, which contradicts the choice of $a$. On the other hand, if $u =_M u'x$ and $v =_M yv'$ for some $u', v' \in \langle x,y,z \rangle$, then 
\[a =_M u'(yzx)(yzx)v' =_M u'(yz)(yzx)(zx)v' =_M u'y(zyzxz)xv',\] 
while 
\[b =_M u'(xy)z(xy)v' =_M u'(yzx)z(yzx)v' =_M u'y(zxzyz)xv',\]
and the two admit the same expansions thereafter. Therefore we may assume that $a$ does not contain $yx$ as a subword. 

Now, in order to be expandable into a longer word, $a$ must contain at least one instance of $xy$. Suppose that $a = uzx^kyv$, for some $u,v \in \langle x,y,z \rangle$ and $k \in \N^+$, and let $b := ux^{k+1}yv$. Then any applications of the defining relation to $a$, that results in a longer word, can also be applied to $b$, but $b$ is further expandable, by (1). So we may assume that $a = x^kyv$, for some $v \in \langle x,y,z \rangle$ and $k \in \N^+$. Similarly, $a$ cannot be of the form  $x^ky^lzv$, for some $v \in \langle x,y,z \rangle$ and $k,l \in \N^+$, since $x^ky^{l+1}v$ is strictly more expandable. Therefore $a$ can be taken to be of the form $x^ky^l$, for some $k,l \in \N$ satisfying $n=k+l$.
\end{proof}

\begin{proposition} \label{elasticity-eg}
The monoid $M := \langle x,y,z \mid (xy,yzx) \rangle$ is cancellative, and $\rho(M) = 2$ $($so $M$ is BF$)$, but $M$ does not have accepted elasticity.
\end{proposition}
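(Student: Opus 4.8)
The plan is to dispatch the three assertions in turn, leaning on Lemma~\ref{eg1-lem} for the elasticity computations and supplying a separate counting argument for boundedness. \emph{Cancellativity} is immediate from Proposition~\ref{adyan-prop}: writing $M = \langle X \mid (a,b)\rangle$ with $X = \{x,y,z\}$, $a = xy$, and $b = yzx$, the word $a$ begins with $x$ and ends with $y$, while $b$ begins with $y$ and ends with $x$, so $a$ and $b$ have distinct first letters and distinct last letters. Hence $M$ is Adyan, and therefore cancellative.

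For \emph{the elasticity}, I would combine parts (2) and (3) of Lemma~\ref{eg1-lem}. Fix $a \in \langle x,y,z\rangle$ with $|a| = n$. If $n \leq 1$ then $\rho(\mathsf{L}_M(a)) \leq 1$, and if $n \geq 2$ then part (3) gives $\rho(\mathsf{L}_M(a)) \leq \rho(\mathsf{L}_M(x y^{\,n-1}))$, which equals $2 - 1/n$ by part (2). Thus $\rho(\mathsf{L}_M(a)) \leq 2 - 1/n < 2$ for every $a$, so $\rho(M) = \sup_a \rho(\mathsf{L}_M(a)) \leq 2$; and since the values $2 - 1/n$ (attained at $x y^{\,n-1}$) approach $2$, in fact $\rho(M) = 2$. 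The same inequality $\rho(\mathsf{L}_M(a)) < 2 = \rho(M)$, valid for \emph{every} $a$, shows at once that the supremum is never attained, so $M$ does \emph{not} have accepted elasticity. This is really the crux of the proposition, and it falls out once the uniform bound of Lemma~\ref{eg1-lem} is in hand.

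It remains to verify that $M$ is \emph{BF}, and here I would stress that finiteness of $\rho(M)$ does not by itself give boundedness, so a direct argument is needed. The idea is to track word statistics that behave well under the single defining relation $xy \leftrightarrow yzx$. Writing $\#_x(w), \#_y(w), \#_z(w)$ for the number of occurrences of each generator in $w \in \langle x,y,z\rangle$, I observe that both $\#_x$ and $\#_y$ are unchanged by an application of the relation (each side contains exactly one $x$ and one $y$), so they are invariants of $=_M$. Let $I(w)$ denote the number of pairs of positions $i<j$ with $w_i = x$ and $w_j = y$. A single rewrite $u\,xy\,v \to u\,yzx\,v$ destroys exactly the inversion formed by the displayed adjacent pair and creates no others, so $I$ drops by $1$ while $\#_z$ rises by $1$; hence $I(w) + \#_z(w)$ is also an invariant of $=_M$. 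Since $I \geq 0$, every word $w'$ with $w' =_M a$ satisfies $\#_z(w') \leq I(a) + \#_z(a)$, whence $|w'| = \#_x(a) + \#_y(a) + \#_z(w')$ is bounded independently of $w'$. Thus $\mathsf{L}_M(a)$ is finite for every $a$, that is, $M$ is BF.

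The only genuinely delicate point is the invariance of $I + \#_z$: one must check carefully that rewriting $xy \to yzx$ alters no inversion other than the one between the two displayed letters. The $x$ that moves right passes only the matched $y$ and the freshly inserted $z$, and the matched $y$ that moves left passes only that same $x$, so every inversion not involving this specific pair is preserved. This is the step I would write out with some care, the rest being routine.
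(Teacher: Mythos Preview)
Your proof is correct, and for cancellativity and the elasticity computation it matches the paper's argument exactly (Adyan for the former, Lemma~\ref{eg1-lem}(2,3) for the latter). The one point of divergence is the BF claim: you assert that ``finiteness of $\rho(M)$ does not by itself give boundedness'' and then supply a separate invariant argument, whereas the paper simply writes ``$\rho(M)=2$ (so $M$ is BF)''. In fact your assertion is mistaken in this framework: if some $\mathsf{L}_M(a)$ were infinite, then $\mathsf{L}_M(a)\cap\N^+$ would be infinite, hence $\sup(\mathsf{L}_M(a)\cap\N^+)=\infty$ while $\min(\mathsf{L}_M(a)\cap\N^+)$ is a positive integer, giving $\rho(\mathsf{L}_M(a))=\infty$ and thus $\rho(M)=\infty$. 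So $\rho(M)<\infty$ really does force BF, and your extra paragraph is unnecessary.

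That said, your invariant argument is correct and worth noting as an alternative. The quantity $I(w)+\#_z(w)$ is indeed preserved by the rewrite $xy\leftrightarrow yzx$ (your careful check that only the displayed inversion is affected is right), and together with the invariance of $\#_x$ and $\#_y$ this bounds $|w'|$ for $w'\in\mathsf{Z}_M(a)$ directly, without passing through elasticity. This is a more hands-on route that avoids appealing to Lemma~\ref{eg1-lem}(3) for boundedness, at the cost of a little extra bookkeeping; the paper's route is shorter but relies on the (easy) implication $\rho(M)<\infty\Rightarrow$ BF that you overlooked.
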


\begin{proof}
First, note that $\rho(\mathsf{L}_M(1)) = 0$, and $\rho(\mathsf{L}_M(x)) = 1$. Hence, by Lemma~\ref{eg1-lem}(2,3), we have 
\[\rho(M) = \sup_{n \in \N^+} \bigg(2-\frac{1}{n}\bigg) = 2,\]
but $\rho(L) < 2$ for all $L \in \mathcal{L}(M)$. Hence $M$ does not have accepted elasticity. The conclusion that $M$ is cancellative follows from Proposition~\ref{adyan-prop} (see subsequent comment).
\end{proof}

While the monoid in the previous proposition does not have accepted elasticity, it does satisfy the Structure Theorem for Unions, by Proposition~\ref{one-relation-prop}. Our next goal is to construct a finitely-presented cancellative monoid that does not satisfy the Structure Theorem for Unions (and is not normalizing), to demonstrate the necessity of the normalizing assumption in Theorem~\ref{duo-thrm}. 

Our construction requires a couple of lemmas, though the last statement in the following lemma is included for its own interest.

\begin{lemma} \label{eg-lem}
Let $n > 1$, and $M := \langle x,y \mid (xy, y^nx) \rangle$.
\begin{enumerate}[\rm (1)]
\item $M$ is cancellative.

\item For all $k \in \N$, we have $x^{k}y =_M y^{n^k}x^k$.

\item For all $k\in \N^+$, we have $\rho_k(M) = n^{k-1}+k-1$.

\item The differences $\rho_k(M) - \rho_{k-1}(M)$ are finite but grow without bound. 
\end{enumerate}
\end{lemma}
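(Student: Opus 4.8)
The plan is to derive part (4) directly from the explicit formula for $\rho_k(M)$ established in part (3). That formula, namely $\rho_k(M) = n^{k-1} + k - 1$ for all $k \in \N^+$, does all the heavy lifting; once it is in hand, part (4) reduces to a one-line computation.

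First I would note that each $\rho_k(M)$ is finite, which is immediate from part (3) since $n^{k-1} + k - 1 \in \N$ for every $k \in \N^+$; consequently the differences $\rho_k(M) - \rho_{k-1}(M)$ are all well-defined finite integers. Then, for $k \geq 2$, I would substitute the formula at consecutive indices and simplify:
\[
\rho_k(M) - \rho_{k-1}(M) = \bigl(n^{k-1} + k - 1\bigr) - \bigl(n^{k-2} + k - 2\bigr) = (n-1)n^{k-2} + 1.
\]
Since $n > 1$, the quantity $(n-1)n^{k-2} + 1$ is a positive integer for each $k \geq 2$, and it tends to $\infty$ as $k \to \infty$; hence the differences are finite but grow without bound, as claimed.

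There is essentially no obstacle in this step: the substance of the lemma resides in part (3), and part (4) is merely a corollary of that computation. The only point meriting care is the bookkeeping at the boundary—confirming that the formula of part (3) applies to both indices appearing in each subtraction (so that $k \geq 2$ is the relevant range), and observing that the case $k = 1$ falls outside the scope of the stated formula for $\rho_0(M)$ and so is simply omitted from the family of differences under consideration.
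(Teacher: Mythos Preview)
Your argument is correct and essentially identical to the paper's: both invoke part (3), compute $\rho_k(M)-\rho_{k-1}(M)=(n-1)n^{k-2}+1$ for $k\geq 2$, and conclude immediately. The only difference is that you spell out the finiteness and the boundary case $k=1$ a bit more explicitly, which the paper leaves implicit.
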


\begin{proof}
(1) This follows from Proposition~\ref{adyan-prop}.

\smallskip

(2) The equation in the statement clearly holds for $k=0$. Assume inductively that $x^{k}y =_M y^{n^k}x^k$, for some $k\geq 0$. Then
\[x^{k+1}y = x(x^{k}y) =_M x(y^{n^k}x^k) =_M (y^{n})^{n^k}x^{k+1} =_M y^{n^{k+1}}x^{k+1},\]
via $n^k$ applications of the defining relation $xy =_M y^nx$, giving the desired conclusion.

\smallskip

(3) Since $n > 1$, for any $u,v \in \langle x,y\rangle$, the element $uxyv \, (=_M uy^nxv)$ has a factorization of greater length than $uyxv$. It follows that among the elements of $\langle x,y\rangle$ of length $k \in \N^+$, $x^{k-1}y$ has a factorization of greatest possible length. Since $y^{n^{k-1}}x^{k-1}$ admits no further expansions, the longest possible factorization of $x^{k-1}y$ has length $n^{k-1}+ k-1$, by (2). Hence $\rho_k(M) = n^{k-1}+k-1$.

\smallskip

(4) By (3), for any $k \geq 2$ we have
\[\rho_k(M) - \rho_{k-1}(M) = n^{k-1}+k-1 - (n^{k-2}+k-2) =  n^{k-2}(n-1)+1,\]
from which the statement follows.
\end{proof}

\begin{lemma} \label{eg3-lem}
Let $n > 1$, and $M := \langle u,v,x,y \mid (u^2,v^3), (xy, y^nx) \rangle$.
\begin{enumerate}[\rm (1)]
\item For all $k\in \N^+$, we have $\rho_k(M) = n^{k-1}+k-1$.

\item For each $k \geq 3$, there is a maximal $\mu_k(M) \in \N^+$ such that $\, \{\mu_k(M)-1, \mu_k(M)\} \subseteq \mathcal{U}_{k}(M)$, and, moreover, $\mu_{k}(M) \leq \rho_{k-2}(M)+4$.
\end{enumerate}
\end{lemma}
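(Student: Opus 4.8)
The plan is to exploit that the two defining relations act on the disjoint alphabets $\{u,v\}$ and $\{x,y\}$, so that $M$ is the monoid free product of $A := \langle u,v \mid (u^2,v^3)\rangle$ and $B := \langle x,y \mid (xy,y^nx)\rangle$. First I would observe that every elementary $R$-transition (in the sense of \cite[Proposition 1.5.9]{H}) alters a single maximal block of same-type letters and preserves the sequence of such blocks. Hence if $w = w_1\cdots w_r$ is the decomposition of $w \in \langle u,v,x,y\rangle$ into maximal $\{u,v\}$- and $\{x,y\}$-blocks, then every factorization of $w$ has the form $w_1'\cdots w_r'$ with $w_i' =_M w_i$ inside the respective sub-monoid, giving the Minkowski-sum formula
\[
\mathsf{L}_M(w) = \sum_{i=1}^{r} \mathsf{L}(w_i),
\]
where each summand is computed in $A$ or $B$ according to the type of $w_i$. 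I record two per-block facts. By Proposition~\ref{one-relation-prop} applied to $A$ (where $||u^2|-|v^3||=1$) each $\mathsf{L}_A(t)$ is a finite integer interval, and the invariant weight $u\mapsto 3,\ v\mapsto 2$ shows $\mathsf{L}_A(t)\subseteq[W/3,W/2]$, so $\max\mathsf{L}_A(t)\le \tfrac32\min\mathsf{L}_A(t)$, while a single letter has length set $\{1\}$. By Proposition~\ref{one-relation-prop} applied to $B$ (where $||xy|-|y^nx||=n-1$) each $\mathsf{L}_B(s)$ is an arithmetic progression of difference $n-1$, whose maximum is at most $n^{m-1}+m-1$ when $m=\min\mathsf{L}_B(s)$, by Lemma~\ref{eg-lem}(3). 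Concentrating all length into a single $B$-block then yields part (1): the maximum of $\max\mathsf{L}_M(w)$ over $w$ with $k\in\mathsf{L}_M(w)$ is attained at $x^{k-1}y$, whence $\rho_k(M)=n^{k-1}+k-1$.

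For part (2) I would first settle finiteness and existence. Since $\mathcal{U}_k(M)\subseteq[0,\rho_k(M)]$ is finite, a maximal $\mu_k(M)$ exists once one consecutive pair is produced, and the block formula gives $\mathsf{L}_M(u^2y^{k-2}) = \{2,3\}+\{k-2\}=\{k,k+1\}$ for $k\ge3$, so that $\{k,k+1\}\subseteq\mathcal{U}_k(M)$.

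The substantive step is the upper bound. Given $w$ with $k\in\mathsf{L}_M(w)$, let $p$ be the total minimal length of its $\{u,v\}$-blocks and $q_1,\dots,q_t$ the minimal lengths of its $\{x,y\}$-blocks, so that $p+\sum_j q_j = \min\mathsf{L}_M(w)\le k$. Combining the two per-block bounds with the concentration inequality $\sum_j n^{q_j-1}\le n^{(\sum_j q_j)-1}$ gives
\[
\max\mathsf{L}_M(w)\ \le\ \tfrac32 p+\sum_{j}\big(n^{q_j-1}+q_j-1\big)\ \le\ \tfrac{p}{2}+k-1+n^{\,k-p-1}\ =:\ g(p).
\]
I would check that $g$ is strictly decreasing for $p\ge 2$ (the step $n-1\ge1$ beats $\tfrac12$) and that $g(2)=\rho_{k-2}(M)+3$, while pure-$\{u,v\}$ words satisfy $\max\mathsf{L}_M(w)\le\lfloor 3k/2\rfloor\le\rho_{k-2}(M)+3$ for $k\ge3$. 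Hence any $w$ with $\max\mathsf{L}_M(w)\ge\rho_{k-2}(M)+4$ must have $p\le 1$; its at most one $\{u,v\}$-block is then a single letter with length set $\{1\}$, so $\mathsf{L}_M(w)$ is a Minkowski sum of $\{0\}$ or $\{1\}$ with arithmetic progressions of difference $n-1$, i.e. itself an arithmetic progression of difference $n-1$ containing $k$. Consequently every $l\in\mathcal{U}_k(M)$ with $l\ge\rho_{k-2}(M)+4$ satisfies $l\equiv k\pmod{n-1}$, and since $n-1\ge2$ no two consecutive integers can both lie above $\rho_{k-2}(M)+3$. Therefore $\mu_k(M)\le\rho_{k-2}(M)+4$.

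The main obstacle is the first step: rigorously proving the block-decomposition formula for $\mathsf{L}_M$, namely that two words are $M$-equal exactly when their corresponding same-type blocks are equal in $A$ resp. $B$. This is where disjointness of the two alphabets is essential, and once it is in place the remainder is bookkeeping with the two growth rates (linear for $A$, exponential of step $n-1$ for $B$). I would also flag that the residue argument in part (2) genuinely uses $n-1\ge 2$: for $n=2$ the $\{x,y\}$-blocks produce full intervals reaching $\rho_k(M)$, so the bound $\rho_{k-2}(M)+4$ holds only for $n\ge3$, which is the regime in which the lemma is applied.
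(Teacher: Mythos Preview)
Your approach is genuinely different from the paper's and considerably more systematic. The paper argues part~(1) heuristically (replacing letters to show $x^{k-1}y$ is extremal) and for part~(2) simply asserts that a word of length $k$ containing at least two letters from $\{u,v\}$ has $\max\mathsf{L}_M\le\rho_{k-2}(M)+3$, then says the gaps above this threshold ``must be $2$'' (presumably meaning $n-1$). Your free-product/block decomposition gives the exact formula $\mathsf{L}_M(w)=\sum_i\mathsf{L}(w_i)$, from which both parts follow by clean estimates: the $3/2$-bound for $A$-blocks via the weight invariant, Lemma~\ref{eg-lem}(3) for $B$-blocks, and the concentration inequality. The function $g(p)$ and the residue argument modulo $n-1$ make precise what the paper only sketches. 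The ``main obstacle'' you flag is in fact the standard normal-form theorem for monoid free products (coproducts) on disjoint alphabets, so it is routine once named.

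Your observation about $n=2$ is correct and is an actual defect in the statement of the lemma, not merely a limitation of your method. For $n=2$ the $B$-block length sets are full intervals, so $\mathsf{L}_M(x^{k-1}y)=[k,\,2^{k-1}+k-1]$; hence $\mu_k(M)=\rho_k(M)=2^{k-1}+k-1$, while $\rho_{k-2}(M)+4=2^{k-3}+k+1$, and already at $k=3$ one gets $\mu_3(M)=6>5=\rho_1(M)+4$. Your residue argument genuinely needs $n-1\ge2$, and the paper's ``difference must be $2$'' step fails for the same reason. However, your closing remark that $n\ge3$ is ``the regime in which the lemma is applied'' is not accurate: both Lemma~\ref{eg3-lem} and Proposition~\ref{no-structure-eg} are stated for all $n>1$, so the case $n=2$ is a gap in the paper that you have uncovered, and you should say so rather than suggest it was intentionally excluded.
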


\begin{proof}
(1) For any $w_1,w_2 \in \langle u,v,x,y\rangle$, the element $w_1xyw_2 \, (=_M w_1y^nxw_2)$ has a factorization of greater length than $w_1yxw_2$. Also, given any element of $\langle u,v,x,y\rangle$ containing at least two (not necessarily distinct) elements of $\{u,v\}$, one can obtain an element of $\langle u,v,x,y\rangle$ with a factorization of greater length by removing the two instances of $u$ and $v$, and adding $xy$ at the end. Finally, given an element of $\langle u,v,x,y\rangle$ containing $y$ and at least one instance of $u$ or $v$, one can obtain an element of $\langle u,v,x,y\rangle$ with a factorization of greater length by removing a $u$ or $v$, and adding an $x$ immediately to the left of the $y$. It follows that among elements of $\langle u,v,x,y\rangle$ of length $k \in \N^+$, $x^{k-1}y$ has a factorization of greatest possible length. Since $y^{n^{k-1}}x^{k-1}$ admits no further expansions, the longest possible factorization of $x^{k-1}y$ has length $n^{k-1}+ k-1$, by Lemma~\ref{eg-lem}(2), from which the claim follows.

\smallskip

(2) Let $k \geq 3$. Then $u^k = u^2u^{k-2} =_M v^3 u^{k-2}$, and so $\{k,k+1\} \subseteq \mathcal{U}_{k}(M)$. Since, by (1), $\mathcal{U}_{k}(M)$ is finite, the quantity $\mu_k(M)$ is well-defined.

Now, let $a \in \langle u,v,x,y\rangle$ be such that $|a|=k$. If $a$ contains more than one instance of $u$ and $v$, then, in view of the defining relations, $\max(\mathsf{L}_M(a)) \leq \rho_{k-2}(M) + 3$. Thus if $m \in \mathcal{U}_{k}(M)$ is such that $m\geq \rho_{k-2}(M)+4$, then the difference between $m$ and the successive element of $\mathcal{U}_{k}(M)$ (if there is one) must be at least $2$, since it must be the result of applying repeatedly the relation $(xy,y^nx)$ to a shorter word. It follows that $\mu_k(M) \leq \rho_{k-2}(M)+4$.
\end{proof}

\begin{proposition} \label{no-structure-eg}
For any $n>1$, the monoid $M := \langle u,v,x,y \mid (u^2,v^3), (xy, y^nx) \rangle$ is cancellative and BF, but does not satisfy the Structure Theorem for Unions.
\end{proposition}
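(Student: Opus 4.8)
The plan is to establish the three asserted properties of $M := \langle u,v,x,y \mid (u^2,v^3), (xy, y^nx) \rangle$ in turn: cancellativity, bounded factorization, and failure of the Structure Theorem for Unions. Cancellativity should follow from the Adyan criterion (Proposition~\ref{adyan-prop}), so the first thing I would check is that the defining words satisfy the hypotheses: each relation word lies in $\langle X \rangle \setminus \{1\}$, no relation equates a word to $1$, and the graphs $G_l$ and $G_r$ are acyclic. Here $(u^2,v^3)$ contributes an edge between $u$ and $v$ on both sides (since $u^2$ and $v^3$ begin and end with $u$ and $v$ respectively), and $(xy,y^nx)$ contributes an edge between $x$ and $y$ on the left side and between $y$ and $x$ on the right side. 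Since $\{u,v\}$ and $\{x,y\}$ are disjoint, both graphs are disjoint unions of single edges, hence acyclic, and Adyan applies.

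For the BF property, the key observation is that Lemma~\ref{eg3-lem}(1) already computes $\rho_k(M) = n^{k-1}+k-1 < \infty$ for every $k \in \N^+$, which shows $\mathcal{U}_k(M)$ is bounded above for each $k$. Since $Y = X$ is finite (every generator appears in some relation word), I would invoke Proposition~\ref{fin-elasticity-prop}: the finiteness of each $\rho_k(M)$ rules out statement (3) of that proposition, so statement (2) must fail as well, meaning $\mathsf{L}_M(a)$ is finite for all $a$, i.e.\ $M$ is BF.

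The heart of the matter, and the step I expect to be the main obstacle, is showing that the Structure Theorem for Unions \emph{fails}. The idea is to exploit the tension between the two relations: the relation $(xy,y^nx)$ drives $\rho_k(M)$ up exponentially, while the relation $(u^2,v^3)$ produces elements of $\mathcal{U}_k(M)$ only near the bottom of the range, of size at most roughly $\rho_{k-2}(M)+4$, as quantified by $\mu_k(M)$ in Lemma~\ref{eg3-lem}(2). The Structure Theorem would require that, for all large $k$, the set $\mathcal{U}_k(M)$ be contained in a single residue class $k + d\Z$ and be ``nearly full'' in the middle range $[\lambda_k(M)+m,\rho_k(M)-m]$. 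But the exponential gap between $\mu_k(M) \le \rho_{k-2}(M)+4$ and $\rho_k(M)=n^{k-1}+k-1$ means there is a large window, namely the interval strictly between $\mu_k(M)$ and $\rho_k(M)$, that $\mathcal{U}_k(M)$ cannot meet except possibly at its top end. More precisely, I would argue that above $\mu_k(M)$ the only lengths realized come from applications of $(xy,y^nx)$ (which change length by $n-1$, not by a fixed global difference $d$ compatible with the $(u^2,v^3)$ relation's difference $1$), so $\mathcal{U}_k(M)$ cannot lie in a single arithmetic progression $k+d\Z$ and simultaneously fill the central range.

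To turn this into a contradiction with the theorem's defining inclusion, I would suppose toward a contradiction that fixed $d,k^*,m$ exist as in the definition, and then choose $k \ge k^*$ large enough that $\rho_k(M)-m > \mu_k(M)$ and that $\rho_k(M)-\mu_k(M)$ exceeds any window of width controlled by $d$ and $m$; such $k$ exist because $\rho_k(M)$ grows exponentially while $\mu_k(M)$ grows only like $\rho_{k-2}(M)$. The required inclusion $(k+d\Z)\cap[\lambda_k(M)+m,\rho_k(M)-m] \subseteq \mathcal{U}_k(M)$ would then force infinitely many integers in the window $(\mu_k(M),\rho_k(M)-m]$ to lie in $\mathcal{U}_k(M)$, contradicting the structural description of $\mathcal{U}_k(M)$ above $\mu_k(M)$ coming from Lemma~\ref{eg3-lem}(2). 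The delicate point—and where I would spend the most care—is justifying exactly which lengths are achievable in the top range and confirming that they are too sparse (separated by steps incompatible with the single difference $d$) to satisfy the inclusion; this requires a careful analysis of how the two relations can and cannot combine on a word of length $k$, building on the maximality argument already used to establish $\rho_k(M)$ and $\mu_k(M)$.
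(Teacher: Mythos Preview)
Your outline matches the paper's proof in every structural respect: Adyan for cancellativity, Lemma~\ref{eg3-lem}(1) (via Proposition~\ref{fin-elasticity-prop}) for BF, and the gap between $\mu_k(M)$ and $\rho_k(M)$ for the failure of the Structure Theorem. However, you are overestimating the work needed in the last step, and the passage you flag as ``the delicate point'' is in fact immediate once you extract the right consequence of Lemma~\ref{eg3-lem}(2).

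The simplification you are missing is that the containment $\mathcal{U}_k(M)\subseteq k+d\Z$ already forces $d=1$. Lemma~\ref{eg3-lem}(2) guarantees two \emph{consecutive} integers $\mu_k(M)-1,\mu_k(M)$ in $\mathcal{U}_k(M)$, so any common difference $d$ must divide $1$. With $d=1$ the left-hand inclusion reads $[\lambda_k(M)+m,\rho_k(M)-m]\subseteq\mathcal{U}_k(M)$, i.e.\ an entire interval of consecutive integers sits inside $\mathcal{U}_k(M)$. But $\mu_k(M)$ is by definition the \emph{maximal} place where two consecutive integers occur in $\mathcal{U}_k(M)$; hence $\rho_k(M)-m\le\mu_k(M)\le\rho_{k-2}(M)+4$, giving
\[
m+4\ \ge\ \rho_k(M)-\rho_{k-2}(M)\ =\ n^{k-3}(n^2-1)+2,
\]
which is impossible for fixed $m$ and large $k$. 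No separate ``sparsity analysis'' of the top range of $\mathcal{U}_k(M)$ is required: the maximality clause built into $\mu_k(M)$ already encodes exactly the sparseness you need, and you should use it directly rather than re-deriving it.
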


\begin{proof}
By Lemma~\ref{eg3-lem}(2), for each $k \geq 3$, we have $\{\mu_k(M)-1, \mu_k(M)\} \subseteq \mathcal{U}_{k}(M)$. So if $\mathcal{U}_k (M) \subseteq k + d\Z$ for some $d \in \N^+$ and $k \geq 3$, then necessarily $d=1$. Now, suppose that there exist $k^*\in \N^+$ and $m\in \N$, such that for all $k \geq k^*$,
\[
(k + \Z) \cap [\lambda_{k}(M) + m, \rho_{k}(M) - m] \subseteq \mathcal{U}_k (M),
\]
and hence 
\[
[\lambda_{k}(M) + m, \rho_{k}(M) - m] \subseteq \mathcal{U}_k (M).
\]
Then, by Lemma~\ref{eg3-lem}(2), for all sufficiently large $k$, we have $\rho_{k}(M) - m \leq \rho_{k-2}(M)+4$, and therefore, using Lemma~\ref{eg3-lem}(1) gives
\[
m+4 \geq \rho_{k}(M) - \rho_{k-2}(M) = (n^{k-1}+k-1) - (n^{k-3}+k-3) = n^{k-3}(n^2-1) + 2.
\]
Clearly, no $m\in \N$ has this property, and so $M$ does not satisfy the Structure Theorem for Unions. Lemma~\ref{eg3-lem}(1) also implies that $M$ is BF.

The conclusion that $M$ is cancellative follows easily from Proposition~\ref{adyan-prop}.
\end{proof}

\section{Summary}

It is well-known that commutative finitely-generated cancellative monoids have accepted elasticity \cite[Theorem 3.1.4]{GHK}, satisfy the Structure Theorem for Unions \cite[Theorem 3.6]{FGKT}, and are often fully elastic (e.g., \cite[Theorem 1.2]{Zh19a}), in terms of factorization into atoms. In this paper we prove analogous statements for certain non-commutative monoids, in the broader context of factorization into generators (Proposition~\ref{duo-prop}, and Theorems~\ref{duo-thrm} and~\ref{fully-elastic-theorem}), and give examples showing that, even for finitely-presented cancellative monoids, these properties need not always hold (Propositions~\ref{elasticity-eg} and~\ref{no-structure-eg}). Much remains to be explored, however, as summarized in the following general problem.

\begin{problem}
Let $M := \langle X \mid R \rangle$, where $X$ and $R$ are finite. Characterize $X$ and $R$ for which $M$ is fully elastic, $M$ has accepted elasticity, and $M$ satisfies the Structure Theorem for Unions. 
\end{problem}

\section*{Acknowledgements} 

This work was supported by the Austrian Science Fund FWF, Project P36852-N. Furthermore, the authors would like to thank Salvatore Tringali for fruitful discussions, and the anonymous referee for pointing us to relevant literature and providing helpful suggestions.

\bigskip

\noindent NAWI Graz, Department of Mathematics and Scientific Computing, University of Graz, Heinrichstraße 36, 8010 Graz, Austria

\noindent \emph{Email:} \href{mailto:alfred.geroldinger@uni-graz.at}{alfred.geroldinger@uni-graz.at}

\bigskip

\noindent Department of Mathematics, University of Colorado, Colorado Springs, CO, 80918, USA 

\noindent \emph{Email:} \href{mailto:zmesyan@uccs.edu}{zmesyan@uccs.edu}

\end{document}